\documentclass [11pt] {article}

\usepackage {amssymb,latexsym}
\usepackage {amsmath}
\usepackage{amsthm}
\usepackage{tikz-cd}

\title {Free Reductive Lie Algebra Pairs of Lie-Yamaguti algebras}

\author {
	{\bf Sa\"{\i}d Benayadi
		\thanks{said.benayadi@univ-lorraine.fr}~,
		\addtocounter{footnote}{1}}\\[3mm]
Universit\'{e} de Lorraine, Laboratoire IECL, 
	CNRS-UMR 7502, UFR MIM,\\ 3 rue Augustin Frenel,
	 BP 45112, 57073 Metz Cedex 03, France \\[3mm]
	     {\bf
          Martin
          Bordemann\thanks{martin.bordemann@uha.fr}~,
          \addtocounter{footnote}{2}} \\[3mm]
         D\'{e}partement de Math\'{e}matiques, Labo IRIMAS, 
         Universit\'{e} de Haute Alsace, France\\[3mm]
         {\bf Friedrich Wagemann\thanks{Friedrich.Wagemann@univ-nantes.fr}~,
         \addtocounter{footnote}{3}}\\[3mm]
         Laboratoire de math\'{e}matiques Jean Leray
         UMR 6629 du CNRS,
         Universit\'{e} de Nantes, France  
}             

\usepackage{graphicx}

\newtheorem {lemma} {Lemma}
\newtheorem {proposition} [lemma] {Proposition}
\newtheorem {theorem} [lemma] {Theorem}
\newtheorem {corollary} [lemma] {Corollary}
\newtheorem {definition}[lemma] {Definition}
\theoremstyle{definition}

\newtheorem{example}[lemma]{Example}

\makeatletter
\@addtoreset{equation}{section}

\makeatother

\begin{document}

\maketitle
\thispagestyle{empty}

\begin{abstract}
The goal of this article is to show the categorical links between on the one
hand the category of reductive Lie algebra pairs $\mathcal{RLP}$ and on the
other hand the category of Lie-Yamaguti algebras $\mathcal{LY}$. The fact that
the well-known construction of an enveloping algebra associating to a
Lie-Yamaguti algebra a reductive Lie algebra pair is not functorial leads us
to the main construction of the
article, namely a left adjoint to the natural restriction functor
$G:\mathcal{RLP}\to\mathcal{LY}$. As a final result we observe that the
construction of the enveloping algebra becomes functorial when one restricts
the morphisms of the categories $\mathcal{RLP}$ and $\mathcal{LY}$ to the
surjective ones. Then it becomes a right adjoint to the restriction functor.
\end{abstract}

\noindent {\it Keywords:} Lie algebras, Lie-Yamaguti algebra ,  reductive Lie algebra pair,  homogeneous spaces, category theory, Lie triple systems, Leibniz algebras.

\noindent {\it MSC:}   17A30, 17A32, 17A40, 18A40, 17B05, 22F30, 53B05.


\newpage

%
%
~~~\hfill \textit{Dedicated to the memory of Otto H.~Kegel.}
\section*{Introduction}
 \addcontentsline{toc}{section}{Introduction}
 
Otto H.~Kegel has written most of his
papers in group theory, but he was also quite open to other algebraic subjects 
like for instance associative algebras and Lie algebras: during the Master thesis of one of the authors, M.B., about certain Lie algebras, he used to say that established properties of groups can very often be copied to other fields.\\
 In this note, we shall be dealing with a categorical relation between the class of \textit{reductive Lie algebra pairs} $\mathcal{RLP}$ and the class of \textit{Lie-Yamaguti algebras} $\mathcal{LY}$. The first class comes from a Lie 
theoretical study of reductive homogeneous spaces (mostly in the 1950's, see e.g.~\cite{Nom54}, \cite[p.~190]{KN69}), and consists of triples $\big(\mathfrak{g},\mathfrak{h},\mathfrak{m}\big)$ consisting of a Lie algebra
$\mathfrak{g}$, a subalgebra $\mathfrak{h}\subset\mathfrak{g}$, and an 
$\mathrm{ad}_\mathfrak{h}$ invariant subspace $\mathfrak{m}\subset\mathfrak{g}$ such that
 $\mathfrak{g}=\mathfrak{h}\oplus \mathfrak{m}$: note that in that definition `reductive' does \textbf{not}
 mean that one of the two Lie algebras  $\mathfrak{g}$ or $\mathfrak{h}$
 is a reductive Lie algbra in the sense that its adjoint representation is completely reducible.  The second class is  K.~Yamaguti's generalization (see \cite{Yam58}) of N.~Jacobson's Lie triple systems \cite{Jac49} to a vector space (or even module) $E$ (interpreted as the tangent space at the distinguished point of the homogeneous space) equipped with a bilinear operation $T$ (representing the differential geometric torsion) and a trilinear operation $R$ (representing the differential geometric curvature) of the canonical connection on a reductive homogeneous space, see e.g.~\cite[p.~193]{KN69}. Both operations $T$ and $R$ satisfy six identities derived
from the classical Bianchi identities, see e.g.~\cite[p.~135]{KN63}. This latter approach can be seen as an redundancy-free description of the
affine geometry of the homogeneous space, but is computationally more involved.
Both classes $\mathcal{RLP}$ and $\mathcal{LY}$ are categories whose morphisms
are Lie algebra morphisms preserving the splitting and linear maps intertwining
the bilinear and trilinear operations, respectively.
Lie-Yamaguti algebras have been studied in numerous works more recently, see \cite{CD}
(from the operadic point of view) and 
\cite{BEM1}, and \cite{BEM2} for simple algebras in relation to Jordan algebras.
The aim of this note is to study and establish a \textit{functorial relation between the two categories}
$\mathcal{RLP}$ and $\mathcal{LY}$: such a link --controlled by category theory--
can be quite advantageous in order to translate or copy properties or concepts between the two, and the recent operadic approaches to classes of nonassociative
algebras, see e.g. \cite{CD}, are formulated in this language.\\
First, there is a well-known `easy' functor 
$\mathbf{G}':\mathcal{RLP}\to \mathcal{LY}$ sending an object $\big(\mathfrak{g},\mathfrak{h},\mathfrak{m}\big)$ in $\mathcal{RLP}$ to the complement $\mathfrak{m}$ and projecting (iterated) Lie brackets to the subspace and the subalgebra to obtain the bilinear and trilinear operations of a Lie Yamaguti algebra.\\
Secondly, it turns out to be much more difficult to find a functor $\mathbf{F'}$ in the other direction $\mathcal{LY}\to \mathcal{RLP}$:  the problem we faced is the fact
that to each Lie-Yamaguti algebra $E$ one can assign a well-known classical
reductive Lie algebra pair, its so-called \textit{enveloping Lie algebra}, see e.g.~\cite{Yam58} or \cite[p.~542]{KW01}, $\hat{\mathfrak{g}}(E)$ --which is a reductive Lie algebra pair with $\mathfrak{m}=E$ plus the holonomy of the connection, see e.g.~\cite[p.~206]{KN63}
for definitions. However, as it was pointed out to the authors by Yannick Voglaire, this assignment is well-known to be NOT functorial for the above categories, and we shall provide 
a simple explicit example of this defect in Section 2 of this note.

The first main result of this work is the explicit construction of a 
\textbf{left adjoint functor} $\mathbf{F}':\mathcal{LY}\to \mathcal{RLP}$ to the functor $\mathbf{G}'$ in the general Lie Yamaguti case, see Theorem \ref{TPrincipal}: this will define a universal construction,
a sort of \textit{free reductive Lie algebra pair $\mathfrak{g}(E)$ generated by the Lie-Yamaguti algebra $E$}. However, this adjunction does not define a categorical equivalence. Our result generalizes such a functorial
construction for the particular case of Lie triple systems which had already been sketched without details in 
\cite[p.~155-156]{Jac49} --around ten years before D.~Kan's definition of adjoint functors-- and finally duly formulated in \cite{Smi11}. \\
Next, in order
to reduce a little bit the above redundancy in the category $\mathcal{RLP}$
we have found it useful to define the subcategory $\mathrm{m}\mathcal{RLP}$ of all \textit{$\mathfrak{m}$-generated reductive Lie algebra pairs}, i.e.~where the
Lie algebra $\mathfrak{g}$ is generated by the subspace $\mathfrak{m}$.  It turns out that the inclusion functor
$\mathbf{J}:\mathrm{m}\mathcal{RLP}\to\mathcal{RLP}$ has a right adjoint
$\mathfrak{i}:\mathcal{RLP}\to \mathrm{m}\mathcal{RLP}$ assigning to each
$\big(\mathfrak{g},\mathfrak{h},\mathfrak{m}\big)$ the well-known ideal
$\mathfrak{i}(\mathfrak{g}):=\mathfrak{m}+[\mathfrak{m},\mathfrak{m}]\subset \mathfrak{g}$, see e.g.~\cite[p.~212, Thm.~52]{KN69} 
. Hence the subcategory
is \textit{coreflective}, see e.g.~\cite[p.~91]{Mac98} for definitions,
like the subcategory of all torsion abelian groups in the category of all abelian groups. It is not hard to see that
the above functors $\mathbf{F'}$ and $\mathbf{G'}$ factor over the subcategory, and we arrive at the following chain of adjunctions
\[
    \begin{tikzcd}
    	\mathcal{LY}
    	\arrow[r, yshift=0.7ex, "\mathbf{F}"]
    	          & m\mathcal{RLP}\arrow[l, yshift=-0.7ex, "\mathbf{G}"]
    	          \arrow[r, yshift=0.7ex, "\mathbf{J}"]
    	          &\mathcal{RLP}\arrow[l, yshift=-0.7ex, 
    	          "\boldsymbol{\mathfrak{i}}"]
    \end{tikzcd}
\]
where $\mathbf{F}'=\mathbf{JF}$ and $\mathbf{G}'=\mathbf{G}\boldsymbol{\mathfrak{i}}$.
The reductive Lie algebra pair $\mathbf{F}E=\mathfrak{g}(E)$ associated to a Lie-Yamaguti algebra $E$ by the functor $\mathbf{F}$ is an explicit quotient space of $E\oplus \Lambda^2E$.  It is curious and practical at the same time that the intermediate computations use the theory of Leibniz algebras, see e.g.~\cite{Lod92} for right Leibniz algebras, while here we use left Leibniz algebras (see e.g.~\cite{F}), which have also been used in \cite{KW01} for the enveloping Lie algebra. The difference between $\hat{\mathfrak{g}}(E)$ and $\mathfrak{g}(E)$ can be seen in the corollary that
an abelian Lie-Yamaguti algebra $E$ has an abelian enveloping Lie algebra $\hat{\mathfrak{g}}(E)=E$ whereas the functor $\mathbf{F}$ sends it to the non abelian, Heisenberg type
Lie algebra $\mathfrak{g}(E)=E\oplus \Lambda^2E$ with the obvious 
nonabelian Lie bracket.\\
The second main result is a possibility of situating the enveloping Lie algebra 
$\hat{\mathfrak{g}}(E)$ of a Lie-Yamaguti algebra based on $E$ in a categorical
manner: we show in Theorem \ref{TEnvLieAlgUniversal} that if we define subcategories $\mathcal{LY}\mathrm{s}$, $\mathrm{m}\mathcal{RLP}\mathrm{s}$, 
and $\mathrm{m}\mathcal{RLP}\mathrm{s}$ of  the above three categories by only allowing morphisms which are \textbf{surjective linear maps},
then the assignment of an enveloping Lie algebra to a Lie-Yamaguti algebra extends to surjective morphisms and thus defines a covariant functor $\hat{\mathfrak{g}}$
which turns out to be a \textbf{right adjoint functor} of
the restriction $\mathbf{G}_\mathrm{s}$ of the functor $\mathbf{G}$ to the subcategory $\mathrm{m}\mathcal{RLP}\mathrm{s}$ of $\mathrm{m}\mathcal{RLP}$. Note that these subcategories are respected by the two above functors $\mathbf{F}$
and $\mathbf{G}$. Hence
there is the following second adjunction which implies that every $\mathfrak{m}$-generated reductive Lie algebra pair $(\mathfrak{g},\mathfrak{h,\mathfrak{m}})$ is contained
in a chain of central extensions: 
\[
\begin{tikzcd}
	\mathrm{m}\mathcal{RLP}\mathrm{s}
	\arrow[r, yshift=0.7ex, "\mathbf{G}_\mathrm{s}"]
	&
	\mathcal{LY}\mathrm{s}
	\arrow[l, yshift=-0.7ex, "\hat{\mathfrak{g}}"]	
\end{tikzcd}
~~~\mathrm{implying}~~~
\begin{tikzcd}
	\mathfrak{g}(\mathfrak{m})
	\arrow[r, twoheadrightarrow, "\boldsymbol{\epsilon}_\mathfrak{g}"]
	&   \mathfrak{g}
	\arrow[r, twoheadrightarrow, "\boldsymbol{\eta}_{s\mathfrak{g}}"]
	&\hat{\mathfrak{g}}(\mathfrak{m})
\end{tikzcd}.
\]



\noindent \textbf{Acknowledgements}:
The authors would like to dedicate this work to the memory of Otto H. Kegel who
has passed away last year on his $91$st birthday. We all remember him as a gentle,
competent and very generous mathematician whose support has always been unlimited and 
valuable. In particular I, M.B., have learned true mathematics in innumerous conversations with him when I did my Diplomarbeit (Master thesis):
I profited a lot from his witty, but at the same time 
precise way of looking at mathematics which shaped my way of mathematical thinking and allowed me to pass from theoretical physics to mathematics. He also helped me in a substantial way to find a job at a university through his wide-spread network
of colleagues and friends. It is a great loss that he is non longer with us.
\\
The authors would also like to thank Yannick Voglaire for fruitful discussions,
and the anonymous referee for giving us very many very useful remarks which
improved the manuscript considerably.

\section{Reductive Lie algebra pairs and Lie-Yamaguti algebras}

 In this section $K$ is always a 
fixed
commutative associative unital ring containing the field of all 
rational numbers $\mathbb{Q}$ as a unital subring. All modules
are considered over $K$, and the symbol $\otimes$ is short for $\otimes_K$.
In view of MacLane's coherence theorem (see \cite[p.~165--170]{Mac98}
we can and will assume that the cartesian product $\times$ for sets and the tensor product $\otimes$ are associative. We shall write
$K$-multilinear maps in the old-fashioned non-tensorial
way with multiple arguments separated by commas.

Recall that a 
 \textit{reductive Lie algebra pair} $(\mathfrak{g},\mathfrak{h},\mathfrak{m})$,
 which in the following will be abbreviated by \textit{RLA pair}, is a triple consisting of a Lie algebra
$\mathfrak{g}$, a subalgebra $\mathfrak{h}\subset \mathfrak{g}$, and a complementary $K$-submodule $\mathfrak{m}\subset \mathfrak{g}$ of $\mathfrak{h}$
in $\mathfrak{g}$
such that $\mathfrak{m}$ is invariant under the adjoint action
of $\mathfrak{h}$, i.e.,
\begin{equation}\label{EqDefMStableAdH}
  [\mathfrak{h},\mathfrak{m}]\subset \mathfrak{m}
  ~~\Longleftrightarrow~~
  \forall~y\in\mathfrak{h},~\forall~z\in \mathfrak{m}:~
     [y,z]\in \mathfrak{m},
\end{equation}
see e.g.~\cite{NVH65} for cohomological obstructions to the existence of such complements. These pairs form the objects of a category
$\mathcal{RLP}$, where the set of all morphisms from
$(\mathfrak{g},\mathfrak{h},\mathfrak{m})$ to
$(\mathfrak{g}',\mathfrak{h}',\mathfrak{m}')$ consists
of all morphisms of Lie algebras $\phi:\mathfrak{g}\to
\mathfrak{g}'$ respecting the splitting, i.e.,
$\phi(\mathfrak{h})\subset \mathfrak{h}'$ and
$\phi(\mathfrak{m})\subset \mathfrak{m}'$. The restrictions of $\phi$ to $\mathfrak{h}$ and to $\mathfrak{m}$ are denoted by $\phi_\mathfrak{h}$ and
$\phi_\mathfrak{m}$, respectively. 
Recall that the particular case $[\mathfrak{m},\mathfrak{m}]\subset\mathfrak{h}$
leads to $\mathbb{Z}_2$-graded Lie algebras, also called
\textit{symmetric Lie algebras}, see e.g.~\cite[p.~225]{KN69}
or \cite{Hel78}.\\
Returning to the general case,
for a given RLA pair $(\mathfrak{g},\mathfrak{h},\mathfrak{m})$ denote the canonical projections $\mathfrak{g}\to \mathfrak{h}$
(with kernel $\mathfrak{m}$) and 
$\mathfrak{g}\to \mathfrak{m}$
(with kernel $\mathfrak{h}$) by
\begin{equation}
  \forall~x\in \mathfrak{g}:~~~x\mapsto x_\mathfrak{h}\in\mathfrak{h},~~~
   \mathrm{and}~~~
   x\mapsto x_\mathfrak{m}\in\mathfrak{m}
\end{equation}
where we adopt the notation from \cite[p.~191]{KN69}.
Obviously, we have $x=x_\mathfrak{h}+x_\mathfrak{m}$. Moreover, as an immediate consequence of (\ref{EqDefMStableAdH}), we have the following identities:
\begin{equation}\label{EqDefRLAadHpreservesHAndM}
 \forall~x\in \mathfrak{g},~\forall~y\in \mathfrak{h},~\forall~z\in \mathfrak{m}
   : ~~~[y,x]_\mathfrak{h}=
          \big[y,x_\mathfrak{h}\big],~~~ 
          \big[y,x\big]_\mathfrak{m}=
          \big[y,x_\mathfrak{m}\big],~~~\mathrm{and}~~~
          [z,x]_\mathfrak{h}=
          \big[z,x_\mathfrak{m}\big]_\mathfrak{h} .
   \end{equation}
   For any morphism $\phi: (\mathfrak{g},\mathfrak{h},\mathfrak{m})\to
   (\mathfrak{g}',\mathfrak{h}',\mathfrak{m}')$ we split the identity for the morphism into components:
   \begin{equation}\label{EqCompMMComponentsMorph}
   	\forall~z_1,z_2\in\mathfrak{m}:~~
   \phi_\mathfrak{h}	\left([z_1,z_2]_\mathfrak{h}\right)
   =\left[\phi_\mathfrak{m}(z_1),\phi_\mathfrak{m}(z_1)\right]_{\mathfrak{h}'}
   ~~\mathrm{and}~~
   \phi_\mathfrak{m}	\left([z_1,z_2]_\mathfrak{m}\right)
   =\left[\phi_\mathfrak{m}(z_1),\phi_\mathfrak{m}(z_1)\right]_{\mathfrak{m}'} .   
   \end{equation}

%
%


 \noindent Moreover,
call a subalgebra $\mathfrak{g}_1\subset
\mathfrak{g}$ --where $(\mathfrak{g},\mathfrak{h},\mathfrak{m})$ is an RLA pair--
\textit{transitive} iff $\mathfrak{m}\subset \mathfrak{g}_1$. Obviously, a transitive Lie subalgebra $\mathfrak{g}_1$ is an RLA pair $(\mathfrak{g}_1,\mathfrak{h}\cap \mathfrak{g}_1,
\mathfrak{m})$. As in \cite[p.212, Thm.52]{KN69}, define
\begin{equation}\label{EqDefLieIdealI}
	\mathfrak{i}(\mathfrak{g},\mathfrak{h},\mathfrak{m}) :=
  \mathfrak{i}(\mathfrak{g}):= 
     \mathfrak{m} + [\mathfrak{m},\mathfrak{m}],
\end{equation}
and we call the RLA pair $(\mathfrak{g},\mathfrak{h},\mathfrak{m})$  \textit{$\mathfrak{m}$-generated} iff $\mathfrak{g}=\mathfrak{i}(\mathfrak{g})$.
We have the following easy
\begin{proposition} With the above notation: $\mathfrak{i}(\mathfrak{g})$ is an
	ideal of $\mathfrak{g}$ and equal to the minimal transitive subalgebra of 
	$\mathfrak{g}$ w.r.t.~$\mathfrak{m}$. Moreover,
  the subclass $\mathrm{m}\mathcal{RLP}$ of all \textit{$\mathfrak{m}$-generated}
  RLA pairs is a full subcategory of $\mathcal{RLP}$, and there is the following
  adjunction of functors
  \begin{equation}
  	\begin{tikzcd}
  		 \mathrm{m}\mathcal{RLP}
  		\arrow[r, yshift=0.7ex, "\mathbf{J}"]
  		&\mathcal{RLP}\arrow[l, yshift=-0.7ex, 
  		"\boldsymbol{\mathfrak{i}}"]
  	\end{tikzcd}
  \end{equation}
  where $\mathbf{J}$ is the inclusion functor and $\boldsymbol{\mathfrak{i}}$
  is the functor assigning to $(\mathfrak{g},\mathfrak{h},\mathfrak{m})$
  the ideal $\mathfrak{i}(\mathfrak{g})$. The unit of the adjunction is an isomorphism and the counit is the injection
  $\mathfrak{i}(\mathfrak{g})\to \mathfrak{g}$ whence $m\mathcal{RLP}$ is thus a coreflective subcategory of $\mathcal{RLP}$, see \cite[p.~91]{Mac98}.
\end{proposition}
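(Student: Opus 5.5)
The plan is to first establish the ideal property of $\mathfrak{i}(\mathfrak{g})=\mathfrak{m}+[\mathfrak{m},\mathfrak{m}]$ by checking brackets with $\mathfrak{h}$ and with $\mathfrak{m}$ separately, since $\mathfrak{g}=\mathfrak{h}\oplus\mathfrak{m}$. For $y\in\mathfrak{h}$ we have $[y,\mathfrak{m}]\subset\mathfrak{m}$ by (\ref{EqDefMStableAdH}). For $z_1,z_2\in\mathfrak{m}$ the Jacobi identity gives
\[
[y,[z_1,z_2]]=[[y,z_1],z_2]+[z_1,[y,z_2]]\in[\mathfrak{m},\mathfrak{m}].
\]
For $z\in\mathfrak{m}$ we have $[z,\mathfrak{m}]\subset[\mathfrak{m},\mathfrak{m}]$ directly. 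The only nontrivial term is $[z,[z_1,z_2]]$. Here I will split $[z_1,z_2]=[z_1,z_2]_\mathfrak{h}+[z_1,z_2]_\mathfrak{m}$. The $\mathfrak{m}$-part gives $[z,[z_1,z_2]_\mathfrak{m}]\in[\mathfrak{m},\mathfrak{m}]$. For the $\mathfrak{h}$-part, $[z,[z_1,z_2]_\mathfrak{h}]=-[[z_1,z_2]_\mathfrak{h},z]\in\mathfrak{m}$ by $\mathfrak{h}$-invariance of $\mathfrak{m}$. Hence $[\mathfrak{g},\mathfrak{i}(\mathfrak{g})]\subset\mathfrak{i}(\mathfrak{g})$. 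This is the step I expect to require the most care, although it is short. The point is that one must avoid iterating Jacobi, and the decomposition trick settles it.

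Minimality then follows quickly. An ideal is in particular a subalgebra, and it contains $\mathfrak{m}$, so it is transitive. Any transitive subalgebra $\mathfrak{g}_1$ contains $\mathfrak{m}$ and hence $[\mathfrak{m},\mathfrak{m}]$, so it contains $\mathfrak{i}(\mathfrak{g})$. As a consequence, $(\mathfrak{i}(\mathfrak{g}),\mathfrak{h}\cap\mathfrak{i}(\mathfrak{g}),\mathfrak{m})$ is an RLA pair. The complement property holds because $\mathfrak{m}\subset\mathfrak{i}(\mathfrak{g})$: any $x\in\mathfrak{i}(\mathfrak{g})$ has $x_\mathfrak{h}=x-x_\mathfrak{m}\in\mathfrak{i}(\mathfrak{g})$. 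This pair is $\mathfrak{m}$-generated, since $\mathfrak{i}(\mathfrak{i}(\mathfrak{g}))=\mathfrak{m}+[\mathfrak{m},\mathfrak{m}]=\mathfrak{i}(\mathfrak{g})$. Fullness of $\mathrm{m}\mathcal{RLP}$ is by definition, as we take all $\mathcal{RLP}$-morphisms between $\mathfrak{m}$-generated objects.

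Next I define $\boldsymbol{\mathfrak{i}}$ on morphisms by restriction. If $\phi$ respects the splittings, then $\phi(\mathfrak{m})\subset\mathfrak{m}'$ and $\phi([\mathfrak{m},\mathfrak{m}])\subset[\mathfrak{m}',\mathfrak{m}']$. So $\phi$ maps $\mathfrak{i}(\mathfrak{g})$ into $\mathfrak{i}(\mathfrak{g}')$, and it maps $\mathfrak{h}\cap\mathfrak{i}(\mathfrak{g})$ into $\mathfrak{h}'\cap\mathfrak{i}(\mathfrak{g}')$. Functoriality is then clear.

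For the adjunction $\mathbf{J}\dashv\boldsymbol{\mathfrak{i}}$ I take the counit $\varepsilon_\mathfrak{g}:\mathfrak{i}(\mathfrak{g})\hookrightarrow\mathfrak{g}$, which is a morphism of RLA pairs. I then verify its universal property. Let $\mathfrak{g}_0$ be $\mathfrak{m}_0$-generated and $\psi:\mathfrak{g}_0\to\mathfrak{g}$ a morphism. Then
\[
\psi(\mathfrak{g}_0)=\psi(\mathfrak{m}_0)+[\psi(\mathfrak{m}_0),\psi(\mathfrak{m}_0)]\subset\mathfrak{i}(\mathfrak{g}).
\]
So $\psi$ factors uniquely through the injective map $\varepsilon_\mathfrak{g}$, and the factorization respects the splittings. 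The unit $\mathfrak{g}_0\to\mathfrak{i}(\mathfrak{g}_0)$ is the identity because $\mathfrak{g}_0=\mathfrak{i}(\mathfrak{g}_0)$, so it is an isomorphism. This gives coreflectivity.
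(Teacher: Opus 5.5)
Your proof is correct and follows essentially the same route as the paper. The ideal property comes from splitting into $\mathfrak{h}$- and $\mathfrak{m}$-components, minimality holds because every transitive subalgebra contains $\mathfrak{m}+[\mathfrak{m},\mathfrak{m}]$, and the adjunction follows because any morphism out of an $\mathfrak{m}$-generated pair corestricts to $\mathfrak{i}(\mathfrak{g}')$. You also fill in the details the paper calls routine, and your restriction argument via $\phi([z_1,z_2])=[\phi(z_1),\phi(z_2)]$ slightly more directly replaces the paper's use of $\mathfrak{h}\cap\mathfrak{i}(\mathfrak{g})$ as the span of the $[z_1,z_2]_\mathfrak{h}$.
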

\begin{proof}
It is a routine check upon using $\mathfrak{h}$- and $\mathfrak{m}$-components
that $\mathfrak{i}(\mathfrak{g})$ is an ideal of $\mathfrak{g}$ which obviously is
transitive. Since every transitive subalgebra contains $\mathfrak{m}$ and 
$[\mathfrak{m},\mathfrak{m}]$, the ideal $\mathfrak{i}(\mathfrak{g})$ is contained in
every transitive subalgebra, and thus the first statement is clear. Next,
the fact that
\begin{equation}\label{EqCompHCompOfLieIdealI}
	\mathfrak{h}\cap \mathfrak{i}(\mathfrak{g})
	=K\mathrm{span}\big\{[z_1,z_2]_\mathfrak{h}
	~\big|~z_1,z_2\in\mathfrak{m}\big\}
\end{equation}
shows that $\big(\mathfrak{i}(\mathfrak{g}),\mathfrak{i}(\mathfrak{g})\cap \mathfrak{h},\mathfrak{m}\big)$ is in $\mathrm{m}\mathcal{RLP}$ and
implies that each morphism $\mathfrak{g}\to\mathfrak{g}'$ in $\mathcal{RLP}$ restricts to a morphism of RLA pairs $\mathfrak{i}(\mathfrak{g})\to
\mathfrak{i}(\mathfrak{g}')$, see (\ref{EqCompMMComponentsMorph}), whence
$\boldsymbol{\mathfrak{i}}$ is a well-defined functor. The adjunction properties are easily checked upon observing that for any
$\mathfrak{m}$-generated RLA pair $\mathfrak{g}$ and each morphism
$\mathfrak{g}\to\mathfrak{g}'$ of RLA pairs automatically corestricts to
$\mathfrak{i}(\mathfrak{g}')\subset \mathfrak{g}'$. 
\end{proof}

\noindent 
Returning to an arbitrary RLA pair $(\mathfrak{g},\mathfrak{h},\mathfrak{m})$
one defines the following $K$-bilinear map
$T:\mathfrak{m}\times\mathfrak{m}\to \mathfrak{m}$
and $K$-trilinear map
$R:(\mathfrak{m}\times\mathfrak{m})\times \mathfrak{m}\to \mathfrak{m}$ by 
\begin{eqnarray}
\forall~z,z'\in
\mathfrak{m}:~~~	T(z,z') & := &  -[z,z']_\mathfrak{m},
	   \label{EqDefRLAMapT}\\
\forall~z,z',z''\in
\mathfrak{m}:~~~	R(z,z',z'')=:R(z,z')z'' 
	  & := & -\big[[z,z']_\mathfrak{h},z''\big],
	  \label{EqDefRLAMapR}
\end{eqnarray}
compare \cite[p.~193,~Thm.~2.6,(1),(2)]{KN69}: this explains the unusual notation for the pair of brackets $(T,R)$
because of the relation to the \textit{torsion tensor} $T$ and the \textit{curvature tensor} $R$ of a particular canonical connection in the differential geometry of reductive homogeneous spaces, see e.g.~\cite[p.~190]{KN69}. 

\noindent Using the following notation for \textit{cyclic sums} borrowed from \cite[p.~135]{KN63}: given any $K$-modules
$V,W$ and any trilinear map $\Xi:V\times V\times V\to W$ we write for all
$a,b,c\in V$
\begin{equation}
	\forall~a,b,c\in V:~~~\mathfrak{S}_{(a,b,c)}\big(\Xi(a,b,c)\big):=
	\Xi(a,b,c)+\Xi(b,c,a)+\Xi(c,a,b).
\end{equation}
We obtain the 
following set of identities for $T$ and $R$ which can easily be deduced
from the definitions (\ref{EqDefRLAMapT}) and (\ref{EqDefRLAMapR}) by taking certain $\mathfrak{h}$- and $\mathfrak{m}$-components in the Jacobi identity for
the Lie bracket of $\mathfrak{g}$ and from  (\ref{EqCompMMComponentsMorph}):
\begin{proposition}\label{PLRAtoLieY}
	Let $(\mathfrak{g},\mathfrak{h},\mathfrak{m})$ be an arbitrary RLA pair. Then
	for all $z_1,z_2, z_3,z_4,z\in\mathfrak{m}$
	 the maps $T$ and $R$ satisfy the following six identities:
	\begin{eqnarray}
	T(z_1,z_2) & = & -T(z_2,z_1), \label{EqDefRLAToLieYAntisymT}\\
	R(z_1,z_2)z & = & -R(z_2,z_1)z, \label{EqDefRLAToLieYAntisymR}\\
	\mathfrak{S}_{(z_1,z_2,z_3)} 
	\left(R(z_1,z_2)z_3-T\big(T(z_1,z_2),z_3\big)\right)
	& = & 0, \label{EqDefRLAToLieYCyclicRandTTBianchi1}\\
	\mathfrak{S}_{(z_1,z_2,z_3)} 
	\left(R\big(T(z_1,z_2),z_3\big)z\right) & = &
	 0,\label{EqDefRLAToLieYCyclicRT}\\
	R(z_1,z_2)\big(T(z_3,z_4)\big)& = &
	  T\big(R(z_1,z_2)z_3,z_4\big)+T\big(z_3,R(z_1,z_2)z_4\big),
	   \label{EqDefRLAToLieYDeriROnT}\\
	R(z_1,z_2)\big(R(z_3,z_4)z\big) & = &
	R\big(R(z_1,z_2)z_3,z_4\big)z
	+R\big(z_3,R(z_1,z_2)z_4\big)z\nonumber \\
& &	+R(z_3,z_4)\big(R(z_1,z_2)z\big).
\label{EqDefRLAToLieYDeriROnR}
\end{eqnarray}
Moreover, let $\phi:(\mathfrak{g},\mathfrak{h},\mathfrak{m})\to
(\mathfrak{g}',\mathfrak{h}',\mathfrak{m}')$ be a morphism
of RLA pairs. Then its $\mathfrak{m}$-component
$\phi_\mathfrak{m}:\mathfrak{m}\to \mathfrak{m}'$ maps 
 $T$ and $R$ to their corresponding maps $T'$ and $R'$, respectively, i.e., 
\begin{eqnarray}
     \forall~z_1,z_2\in \mathfrak{m}:~~~\phi_\mathfrak{m}\big(T(z_1,z_2)\big)
     & = & T'\big(\phi_\mathfrak{m}(z_1),
        \phi_\mathfrak{m}(z_2)\big), \label{EqCompPhiSubMIntertwTTPrime}\\
     \forall~z, z_1,z_2\in \mathfrak{m}:~~~\phi_\mathfrak{m}\big(R(z_1,z_2)z\big)
     & = & R'\big(\phi_\mathfrak{m}(z_1),
     \phi_\mathfrak{m}(z_2)\big)(\phi_\mathfrak{m}(z)).
     \label{EqCompPhiSubMIntertwRRPrime}
\end{eqnarray}
\end{proposition}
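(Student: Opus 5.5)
The plan is to obtain each identity by decomposing brackets in $\mathfrak{g}$ into $\mathfrak{h}$- and $\mathfrak{m}$-components, using (\ref{EqDefMStableAdH}) and (\ref{EqDefRLAadHpreservesHAndM}) to move components through brackets. Identities (\ref{EqDefRLAToLieYAntisymT}) and (\ref{EqDefRLAToLieYAntisymR}) follow directly from the antisymmetry of the Lie bracket and the linearity of the projections. For the two cyclic identities, I will write the Jacobi identity $\mathfrak{S}_{(z_1,z_2,z_3)}[[z_1,z_2],z_3]=0$ for $z_i\in\mathfrak{m}$ and split $[z_1,z_2]=[z_1,z_2]_\mathfrak{h}+[z_1,z_2]_\mathfrak{m}=[z_1,z_2]_\mathfrak{h}-T(z_1,z_2)$. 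Then
\[
[[z_1,z_2],z_3]=-R(z_1,z_2)z_3-[T(z_1,z_2),z_3],
\]
where the first term lies in $\mathfrak{m}$ by (\ref{EqDefMStableAdH}). The second term has $\mathfrak{m}$-component $T(T(z_1,z_2),z_3)$ and $\mathfrak{h}$-component $-[T(z_1,z_2),z_3]_\mathfrak{h}$.

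Taking the $\mathfrak{m}$-component of the cyclic sum gives $\mathfrak{S}\big(-R(z_1,z_2)z_3-T(T(z_1,z_2),z_3)\big)=0$. I need to check the sign of the $TT$ term carefully here. Since $[T(z_1,z_2),z_3]_\mathfrak{m}=-T(T(z_1,z_2),z_3)$, we get $-[T(z_1,z_2),z_3]_\mathfrak{m}=+T(T(z_1,z_2),z_3)$, so the $\mathfrak{m}$-part reads $\mathfrak{S}\big(-R(z_1,z_2)z_3+T(T(z_1,z_2),z_3)\big)=0$, which is (\ref{EqDefRLAToLieYCyclicRandTTBianchi1}). Taking the $\mathfrak{h}$-component gives $\mathfrak{S}[T(z_1,z_2),z_3]_\mathfrak{h}=0$. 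Applying $\mathrm{ad}$ of this element of $\mathfrak{h}$ to $z$ and multiplying by $-1$ gives (\ref{EqDefRLAToLieYCyclicRT}).

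For (\ref{EqDefRLAToLieYDeriROnT}) and (\ref{EqDefRLAToLieYDeriROnR}), set $y:=[z_1,z_2]_\mathfrak{h}\in\mathfrak{h}$, so that $R(z_1,z_2)=-\mathrm{ad}_y|_\mathfrak{m}$. Since $\mathrm{ad}_y$ is a derivation of $\mathfrak{g}$, it satisfies
\[
\mathrm{ad}_y[z_3,z_4]=[\mathrm{ad}_yz_3,z_4]+[z_3,\mathrm{ad}_yz_4].
\]
Moreover, $\mathrm{ad}_y$ commutes with both projections by (\ref{EqDefRLAadHpreservesHAndM}). Taking the $\mathfrak{m}$-component and multiplying by $-1$ twice gives (\ref{EqDefRLAToLieYDeriROnT}). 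Taking the $\mathfrak{h}$-component gives
\[
[y,[z_3,z_4]_\mathfrak{h}]=[\mathrm{ad}_yz_3,z_4]_\mathfrak{h}+[z_3,\mathrm{ad}_yz_4]_\mathfrak{h}.
\]
Applying $\mathrm{ad}$ of both sides to $z$, and using $\mathrm{ad}_{[y,w]}=[\mathrm{ad}_y,\mathrm{ad}_w]$ on the left, yields $[R(z_1,z_2),R(z_3,z_4)]z=R(R(z_1,z_2)z_3,z_4)z+R(z_3,R(z_1,z_2)z_4)z$ once the signs are tracked. This is exactly (\ref{EqDefRLAToLieYDeriROnR}).

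For the morphism statement, (\ref{EqCompPhiSubMIntertwTTPrime}) is immediate from the second equation of (\ref{EqCompMMComponentsMorph}). Note that the displayed formula there has a typo, and both arguments should read $\phi_\mathfrak{m}(z_1),\phi_\mathfrak{m}(z_2)$. For (\ref{EqCompPhiSubMIntertwRRPrime}) I will write $\phi_\mathfrak{m}([[z_1,z_2]_\mathfrak{h},z])=\phi([[z_1,z_2]_\mathfrak{h},z])=[\phi_\mathfrak{h}([z_1,z_2]_\mathfrak{h}),\phi_\mathfrak{m}(z)]$. I then apply the first equation of (\ref{EqCompMMComponentsMorph}).

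There is no real obstacle in this argument. The only delicate point is sign bookkeeping in the two cyclic identities and in (\ref{EqDefRLAToLieYDeriROnR}), where the minus signs in the definitions of $T$ and $R$ must be tracked carefully.
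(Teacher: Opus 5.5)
Your proof is correct and follows the same route the paper indicates: taking $\mathfrak{h}$- and $\mathfrak{m}$-components of the Jacobi identity (for the last two identities, in the form that $\mathrm{ad}_y$ with $y=[z_1,z_2]_\mathfrak{h}$ is a derivation commuting with both projections), and reading off the morphism statement from the componentwise identities (\ref{EqCompMMComponentsMorph}), whose typo you rightly flag. The paper only sketches this, so your explicit sign bookkeeping fills it in; just drop the first, wrongly signed line for the $\mathfrak{m}$-component of the cyclic sum, since your corrected version $\mathfrak{S}\big(-R(z_1,z_2)z_3+T(T(z_1,z_2),z_3)\big)=0$ is the right one.
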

\noindent Note that the identities (\ref{EqDefRLAToLieYAntisymT}) --
(\ref{EqDefRLAToLieYDeriROnR}) are a particular case of the Bianchi identities and some consequences for covariantly constant torsion and curvature, see
e.g.~\cite[p.~135, Thm.~5.3]{KN63}

\noindent 
The preceding results give rise to the following definition which is due to Kiyosi Yamaguti 
\cite[p.~157, Def.~2.1]{Yam58}:
\begin{definition}
	Let $E$ be a $K$-module equipped with a $K$-bilinear map $T:E\times E\to E$ and a $K$-trilinear map
	$R:(E\times E)\times E\to E$ which satisfy the six identities (\ref{EqDefRLAToLieYAntisymT}) --
	(\ref{EqDefRLAToLieYDeriROnR}). Then the pair
	$(E,T,R)$ is called a \textbf{Lie-Yamaguti algebra}
	(short: LY algebra).\\
	Let $(E',T',R')$ be another Lie-Yamaguti algebra.
	A $K$-linear map $\psi:E\to E'$  is called
	a morphism of Lie-Yamaguti algebras if for all
	$z_1,z_2,z_3\in E$
	\begin{equation}\label{EqDefMorphLY} 
	 \psi\big(T(z_1,z_2)\big)
	 =  T'\big(\psi(z_1),
	\psi(z_2)\big)~~\mathrm{and}~~
	\psi\big(R(z_1,z_2)z_3\big)
	 =  R'\big(\psi(z_1),
	\psi(z_2)\big)(\psi(z_3)).
	\end{equation}
	Hence $\psi$ intertwines the corresponding maps $T,R,T',R'$ as $\phi_\mathfrak{m}$ does in 
	(\ref{EqCompPhiSubMIntertwTTPrime})
	and (\ref{EqCompPhiSubMIntertwRRPrime}).
\end{definition}
\noindent K.~Yamaguti has denoted the map $(v,w,z)\mapsto -R(v,w)z$ by a triple bracket
$(v,w,z)\mapsto[v,w,z]$ and the map $(v,w)\mapsto -T(v,w)$ by a multiplication
$(v,w)\mapsto v\circ w$, and he has called these objects `Lie triple algebras'. The motivating  particular case $T=0$ reduces the six conditions of Proposition \ref{PLRAtoLieY} to
(\ref{EqDefRLAToLieYAntisymR}), (\ref{EqDefRLAToLieYCyclicRandTTBianchi1}), and
(\ref{EqDefRLAToLieYDeriROnR}) in which case the Lie-Yamaguti algebra is called a \textit{Lie triple system}, see e.g.~\cite{Jac49}. The other extreme case $R=0$
reduces to (\ref{EqDefRLAToLieYAntisymT}) and (\ref{EqDefRLAToLieYCyclicRandTTBianchi1}) which means that $(E,T)$ is a Lie algebra.\\
It follows at once that Lie-Yamaguti algebras together with their
morphisms form a category $\mathcal{LY}$, and that there
is an obvious functor $\mathbf{G}': \mathcal{LY} \longleftarrow  \mathcal{RLP}$
assigning to each RLA pair $(\mathfrak{g},\mathfrak{h},\mathfrak{m})$ the LY algebra
$(\mathfrak{m},T,R)$ according to
(\ref{EqDefRLAMapT}) and (\ref{EqDefRLAMapR}), and to
each RLA pair morphism $\phi$ the component $\phi_\mathfrak{m}$. It is easy to see
that $\mathbf{G}'$ factorizes as $\mathbf{G}'=\mathbf{G}\boldsymbol{\mathfrak{i}}$:
\begin{equation}\label{EqDefFunctorG}
    \begin{tikzcd}
    	\mathcal{LY} &
    	    \mathrm{m}\mathcal{LRP}
    	      \arrow[l, "\mathbf{G}"'] &
    	        \mathcal{LRP}
    	        \arrow[l, "\boldsymbol{\mathfrak{i}}"']
    \end{tikzcd}
\end{equation}
where $\mathbf{G}$ is the `restriction' of $\mathbf{G}'$ to $\mathrm{m}\mathcal{LRP}$.

\section{Two adjoint functors from Lie-Yamaguti algebras to
	  reductive Lie algebra pairs}

There is a well-established assignment of a RLA pair  
$\big(\hat{\mathfrak{g}}(E), \hat{\mathfrak{h}}(E),E\big)$ to every Lie-Yamaguti algebra $(E,T,R)$ --which is called its
\textit{enveloping Lie algebra}--
in the following way, see e.g.~\cite[p.~158, Prop.~2.1]{Yam58}: consider the $K$-submodule $\hat{\mathfrak{h}}(E)$ of
$\mathrm{Hom}_K(E,E)$ which is spanned by
all the $K$-linear maps of the form 
$\hat{R}(v_1,v_2):z\mapsto R(v_1,v_2)z$, i.e.,
\begin{equation}\label{EqDefHatHOfE}
\hat{\mathfrak{h}}(E):=K\mathrm{span}	
       \big\{ -\hat{R}(v_1,v_2)\in\mathrm{Hom}_K(E,E)~\big|~
                   v_1,v_2\in E\big\}.
\end{equation}
Thanks to (\ref{EqDefRLAToLieYDeriROnR}) the $K$-submodule $\hat{\mathfrak{h}}(E)$, equipped with the commutator of linear maps, is a Lie subalgebra
of $\mathrm{Hom}_K(E,E)$. Define 
\begin{equation}\label{EqDefEnvelLieAlg}
   \hat{\mathfrak{g}}(E):= \hat{\mathfrak{h}}(E)\oplus E
\end{equation}
with the bracket 
\begin{equation}\label{EqDefEnvelLieAlgBracket}
	\forall~\xi,\eta\in \hat{\mathfrak{h}}(E)~\forall~v,w\in E:
	[\xi+v,\eta+w]^\wedge:=\big(\xi\circ \eta-\eta\circ\xi-\hat{R}(v,w)\big)
	                ~~+~~\big(\xi(w)-\eta(v)-T(v,w)\big).
\end{equation}
Since the 1950's it is well known and not hard to check by using the Lie-Yamaguti identities that $\big(\hat{\mathfrak{g}}(E),\hat{\mathfrak{h}}(E),E\big)$ is a reductive Lie algebra pair.\\
Although the enveloping Lie algebra has turned out to be very useful in differential geometry, the algebraic draw-back is the fact that 
the assignment of
a Lie-Yamaguti algebra to its enveloping Lie algebra cannot be defined on
morphisms of Lie-Yamaguti algebras in a functorial way: it follows that this assignment
does \textbf{not} lead to a functor $\mathcal{LY}\to\mathcal{RLP}$
\footnote{We owe this remark to a conversation with
Yannick Voglaire} as the following counter-example shows:

\begin{example}\label{ExCounterexample}
We shall construct two Lie triple systems, hence particular LY algebras with
vanishing binary bracket, by means of the following well known construction:
Recall that for an arbitrary Lie algebra $\big(\mathfrak{l},[~,~]\big)$
there is always a Lie triple system $\big(E(\mathfrak{l})=\mathfrak{l},R_\mathfrak{l}\big)$  where the ternary bracket $R_\mathfrak{l}$ is defined by
\[
\forall~x,y,z\in\mathfrak{l}:~~~R_\mathfrak{l}(x,y)z:=-\big[[x,y],z\big].
\]
Furthermore, it is easy to check that the assignment of $\big(\mathfrak{l},[~,~]\big)$ to $\big(E(\mathfrak{l}),R_\mathfrak{l}\big)$
defines an obvious functor from the category of all Lie algebras to the category of all Lie triple systems. For each $x\in\mathfrak{l}$ let $\mathrm{ad}_x:\mathfrak{l}\to \mathfrak{l}$ denote
the usual adjoint representation $z\mapsto \mathrm{ad}_x(z)=[x,z]$.
It is not hard to see from the above definition of $R_\mathfrak{l}$ that the enveloping Lie algebra $\hat{\mathfrak{g}}(E(\mathfrak{l}))$ of $\big(E(\mathfrak{l}),R_\mathfrak{l}\big)$
is given by the RLA pair $\big(\mathrm{ad}_{[\mathfrak{l},\mathfrak{l}]}\oplus \mathfrak{l},\mathrm{ad}_{[\mathfrak{l},\mathfrak{l}]},\mathfrak{l}\big)$
where $\mathrm{ad}_{[\mathfrak{l},\mathfrak{l}]}\subset \mathrm{Hom}_\mathbb{K}(\mathfrak{l},\mathfrak{l})$ denotes the Lie subalgebra
of the Lie algebra $\mathrm{Hom}_\mathbb{K}(\mathfrak{l},\mathfrak{l})$ spanned by all linear maps of the form $\mathrm{ad}_{[x,y]}$ for some $x,y\in\mathfrak{l}$.
According to (\ref{EqDefEnvelLieAlgBracket}) the Lie bracket on $\hat{\mathfrak{g}}(E(\mathfrak{l}))$ is given by
\[
	\forall~\xi,\eta\in \mathrm{ad}_{[\mathfrak{l},\mathfrak{l}]}~
	\forall~x,y\in\mathfrak{l}:~~
	\big[\xi+x,\eta+y\big]^\wedge= 
	\big(\xi\circ \eta-\eta\circ\xi+\mathrm{ad}_{[x,y]}\big)
	~+~\big(\xi(y)-\eta(x)\big).
\]	
Consider a field
$\mathbb{K}$ of characteristic $0$, let $\mathfrak{l}$ be the abelian Lie algebra $\mathbb{K}^2$, and let $\mathfrak{l}'$ be the six-dimensional nilpotent Lie algebra of
all strictly upper triangular $4\times 4$ matrices.  Let $e_1,e_2$ be the canonical
basis of $\mathbb{K}^2$. Denoting by $E_{ij}$ the standard elementary 
$4\times 4$-matrices for all $1\leq i,j\leq 4$ it is immediate that the six matrices $E_{12},E_{13}, E_{14}, E_{23}, E_{24}, E_{34}$ form a basis for the
Lie algebra $\mathfrak{l}'$. Recall the equations for the Lie brackets
\[
   \forall~1\leq i,j,k,l\leq 4:~~~  [E_{ij},E_{kl}] =\delta_{jk}E_{il}-\delta_{il}E_{kj}
\]
with the usual Kronecker delta $\delta_{ij}=1$ if $i=j$ and $\delta_{ij}=0$ if
$i\not = j$. We observe that the $4$-dimensional subspace $I\subset \mathfrak{l}'$
spanned by $E_{13},E_{14},E_{24}, E_{34}$ forms an ideal of $\mathfrak{l}'$, and that the linear map $p:\mathfrak{l}'\to\mathfrak{l}$ sending $E_{12}$ to $e_1$
and 
$E_{23}$ to $e_2$ and $I$ to $\{0\}$ is a surjective morphism of Lie algebras.
Hence, we get a morphism --also denoted by $p$-- of the Lie triple system
$\big(E(\mathfrak{l}'),R_{\mathfrak{l}'}\big)$ to the abelian Lie triple system $\big(E(\mathfrak{l}),0\big)$. On the other hand, consider the linear map
$i:\mathfrak{l}\to \mathfrak{l}'$ which sends $e_1$ to $E_{12}$ and $e_2$ to
$E_{23}$. This is \textbf{not} a morphism of Lie algebras because
$[e_1,e_2]=0$, but $[E_{12},E_{23}]=E_{13}\not = 0$. However, the map $i$ is a morphism
of the associated Lie triple systems since
\[
R_{\mathfrak{l}'}\big(i(e_1),i(e_2)\big)(i(e_1))=
  R_{\mathfrak{l}'}(E_{12},E_{23})E_{12}=-\big[[E_{12},E_{23}],E_{12}\big]
     =-[E_{13},E_{12}]=0=i\big(R_\mathfrak{l}(e_1,e_2)e_1\big)
\]
and likewise $R_{\mathfrak{l}'}(E_{12},E_{23})E_{23}=0$. It follows that we get
the following diagram of Lie triple systems
\[
\begin{tikzcd}
	E(\mathfrak{l})
	   \arrow[r, yshift=0.7ex, "i"] 
	     & E(\mathfrak{l}')
	     \arrow[l, yshift=-0.7ex, "p"] 
	     ~~~\mathrm{with}~~~ p\circ i=\mathrm{id}_{E(\mathfrak{l})}.
\end{tikzcd}
\]
Since $\mathfrak{l}$ is abelian, we have $\mathrm{ad}_{\mathfrak{l}}=\{0\}$.
Hence, the enveloping Lie algebra $\hat{\mathfrak{g}}(E(\mathfrak{l}))$
is isomorphic to the RLA pair $\big(\mathfrak{l},0,\mathfrak{l}$, hence, to the abelian Lie algebra $\mathfrak{l}$. Moreover, the enveloping Lie algebra $\hat{\mathfrak{g}}(E(\mathfrak{l}'))$ is easily computed to be isomorphic
to the RLA pair
\[
   \hat{\mathfrak{g}}(E(\mathfrak{l}')) 
   \cong \left(\mathbb{K}\mathrm{span}\left\{\mathrm{ad}_{E_{13}},
        \mathrm{ad}_{E_{24}}\right\}
     \oplus \mathfrak{l}', \mathbb{K}\mathrm{span}\left\{\mathrm{ad}_{E_{13}},
     \mathrm{ad}_{E_{24}}\right\}, \mathfrak{l}'\right).
\]
Note that both linear maps $\mathrm{ad}_{E_{13}}$ and $\mathrm{ad}_{E_{24}}$
vanish on $E_{23}, E_{13}, E_{24}$, and $E_{14}$, that $\mathrm{ad}_{E_{13}}$ vanishes on $E_{12}$ and sends $E_{34}$ to the central element $E_{14}$,
whereas $\mathrm{ad}_{E_{24}}$ vanishes on $E_{34}$ and sends $E_{12}$ to the central element $-E_{14}$. It follows that the linear maps $\mathrm{ad}_{E_{13}}$ and $\mathrm{ad}_{E_{24}}$ are linearly independent.\\
Suppose there was a functor $\mathbf{H}:\mathcal{LY}\to \mathcal{RLP}$ assigning to each Lie-Yamaguti algebra $E$ its enveloping Lie algebra 
$\mathbf{H}E:=\hat{\mathfrak{g}}(E)$. Applying the functor $\mathbf{H}$ to the above diagram we would get
a corresponding diagram
\[
\begin{tikzcd}
	\hat{\mathfrak{g}}(E(\mathfrak{l}))
	\arrow[r, yshift=0.7ex, "\mathbf{H}i"] 
	& \hat{\mathfrak{g}}(E(\mathfrak{l}'))
	\arrow[l, yshift=-0.7ex, "\mathbf{H}p"] 
	~~~\mathrm{with}~~~ \mathbf{H}p\circ \mathbf{H}i
	=\mathbf{H}\mathrm{id}_{E(\mathfrak{l})}=
	\mathrm{id}_{\hat{\mathfrak{g}}(E(\mathfrak{l}))}.
\end{tikzcd}
\]
It follows that the morphism of RLA pairs $\mathbf{H}i$ would be injective whereas the morphism of RLA pairs $\mathbf{H}p$ would be surjective: this implies that
the image $\mathbf{H}i\big(\hat{\mathfrak{g}}(E(\mathfrak{l}))\big)$ would be a
two-dimensional abelian subalgebra of the $\mathfrak{l}'$-part of
$\hat{\mathfrak{g}}(E(\mathfrak{l}'))$ which would be a vector space complement 
to the subspace $I\subset \mathfrak{l}'$. A basis of 
$\mathbf{H}i\big(\hat{\mathfrak{g}}(E(\mathfrak{l}))\big)$
could then be given by the matrices $E_{12}+A$ and $E_{23}+B$ with
some $A,B\in I$. An easy computation shows that there would be $\lambda\in\mathbb{K}$ such that
\[
   \big[E_{12}+A,E_{23}+B\big]^\wedge=
   \mathrm{ad}_{[E_{12}+A,E_{23}+B]}=\mathrm{ad}_{E_{13}}+\lambda \mathrm{ad}_{E_{24}}\not =0
\]
thanks to the linear independence of $\mathrm{ad}_{E_{13}}$ and
$\mathrm{ad}_{E_{24}}$. But this is in contradiction to the fact that
the subspace $\mathbf{H}i\big(\hat{\mathfrak{g}}(E(\mathfrak{l}))\big)$
is abelian. Therefore such a functor $\mathbf{H}$ cannot exist.
\hfill $\Box$ 
\end{example}

\noindent In this section we should like to construct a left adjoint
functor 
\begin{equation}
	\mathbf{F}:\mathcal{LY} \longrightarrow  \mathrm{m}\mathcal{RLP}
\end{equation}
to the functor $\mathbf{G}$, see (\ref{EqDefFunctorG}) which will give a sort of `free object in $\mathrm{m}\mathcal{RLP}$ generated by a given Lie-Yamaguti algebra'.
A similar construction has already been done for the more particular Lie triple systems, see
\cite{Smi11}.\\
In order to do so, we start with a Lie-Yamaguti algebra
$(E,T,R)$, and we form the $K$-modules
\begin{equation}\label{EqDefTildeGE}
 \tilde{\mathfrak{m}}(E) := E,~~~~
 \tilde{\mathfrak{h}}(E) :=\Lambda^2E,~~~~\tilde{\mathfrak{g}}(E):= E\oplus \Lambda^2E=
 \tilde{\mathfrak{m}}_E\oplus \tilde{\mathfrak{h}}(E).
\end{equation}
We define the following $K$-bilinear multiplication 
$[~,~]^\sim:\tilde{\mathfrak{g}}(E)\times \tilde{\mathfrak{g}}(E)\to \tilde{\mathfrak{g}}(E)$ for all
 $v,v_1,v_2$, $w,w_1,w_2\in E$
\begin{eqnarray}
 [v_1,v_2]^\sim & := & -T(v_1,v_2)+ v_1\wedge v_2, 
                    \label{EqDefTildeBracketEE} \\
 ~[w_1\wedge w_2, v]^\sim & := &
      -R(w_1,w_2)v =: -[v,w_1\wedge w_2]^\sim,
               \label{EqDefTildeBracketEW2E} \\
 ~[v_1\wedge v_2,w_1\wedge w_2]^\sim & := &
   -\big(R(v_1,v_2)w_1\big)\wedge w_2
   -w_1\wedge \big(R(v_1,v_2)w_2\big).
      \label{EqDefTildeBracketW2EW2E}
\end{eqnarray}
We shall see in Lemma \ref{LFELeibnizProperties} that
this bracket is well-defined whence the pair
$\big(\tilde{\mathfrak{g}},[~,~]^\sim\big)$
will be a non-associative algebra. However,
it will turn out that the above bracket 
$\big[~,~\big]^\sim$ is not a Lie bracket,
it is a priori not even antisymmetric, see 
(\ref{EqDefTildeBracketW2EW2E}).
In order to get an idea for the modification of this bracket $[~,~]^\sim$ on a factor module, it has turned out to be convenient
to use the theory of the slightly more general (left) \textbf{Leibniz algebras}, see e.g.~\cite[p.~332]{Lod92} for right Leibniz algebras and e.g.~\cite{F} for left Leibniz algebras:  for a given
$K$-module $V$ and $K$-bilinear map $[~,~]^\vee:V\times V\to V$
we set:
\begin{equation}\label{EqDefLeibnizIdentityMapL}
	\forall~\xi_1,\xi_2,\xi_3\in V:~~~
 \mathbf{L}(\xi_1,\xi_2,\xi_3):=
    \big[\xi_1,[\xi_2,\xi_3]^\vee\big]^\vee
     -\big[[\xi_1,\xi_2]^\vee,\xi_3\big]^\vee
     -\big[\xi_2,[\xi_1,\xi_3]^\vee\big]^\vee.
\end{equation}
The pair $(V,[~,~]^\vee)$ is called a Leibniz algebra iff
$\mathbf{L}(\xi_1,\xi_2,\xi_3)=0$ for all
$\xi_1,\xi_2,\xi_3\in V$. Recall that every Leibniz algebra whose bracket is antisymmetric is a Lie algebra. Moreover, recall that for every Leibniz algebra $\big(V,[~,~]^\vee\big)$ the $K$-submodule
\begin{equation}\label{EqDefLeibnizQuadraticIdeal}
  q(V)
   :=K\mathrm{span}\big\{[v,w]^\vee+[w,v]^\vee~|~
   v,w\in V\big\}
\end{equation}
is a two-sided abelian ideal of $\big(V,[~,~]^\vee\big)$--
meaning that the induced multiplication vanishes--,
and that the quotient Leibniz algebra 
$\overline{V}:=V/q(V)$ is a Lie algebra.
We first need the following
\begin{lemma}\label{LFELeibnizProperties}
	Let $(E,T,R)$ be a Lie-Yamaguti algebra. Then the
	bilinear bracket $[~,~]^\sim$ on $\tilde{\mathfrak{g}}(E)$ given by eqs (\ref{EqDefTildeBracketEE}),
	(\ref{EqDefTildeBracketEW2E}), and
	(\ref{EqDefTildeBracketW2EW2E}) is well-defined and 
	satisfies the following six equations for all
	$v,v_1,v_2,w,w_1,w_2,z,z_1,z_2\in E$
	\begin{eqnarray}
	 \mathbf{L}(v_1,v_2,z)& = &
	     \mathfrak{S}_{(v_1,v_2,z)}
	        \left(T(v_1,v_2)\wedge z\right)~\in~
	           \tilde{\mathfrak{h}}(E),
	           \label{EqCompLeibnizMMM} \\
	 \mathbf{L}(v,w_1\wedge w_2, z)
	 &= & -~\mathbf{L}(w_1\wedge w_2,v,z)~=~0,
	 \label{EqCompLeibnizMHM}\\
	 \mathbf{L}(v_1\wedge v_2,w_1\wedge w_2, z)
	 & = & 0,
	 \label{EqCompLeibnizHHM}\\
	 \mathbf{L}(v_1, v_2,w_1\wedge w_2) & = &
	    \big(R(v_1,v_2)w_1\big)\wedge w_2
	    +w_1\wedge \big(R(v_1,v_2)w_2\big)\nonumber \\
	& &    +\big(R(w_1,w_2)v_1\big)\wedge v_2
	    +v_1\wedge \big(R(w_1,w_2)v_2\big)\nonumber \\
	& = &  -[v_1\wedge v_2,w_1\wedge w_2]^\sim
	        -[w_1\wedge w_2,v_1\wedge v_2]^\sim
	        ~\in \tilde{\mathfrak{h}}(E),
	      \label{EqCompLeibnizMMH} \\
	 \mathbf{L}(z,v_1\wedge v_2,w_1\wedge w_2) & = & 
	 -\mathbf{L}(v_1\wedge v_2,z,w_1\wedge w_2)~=~0,
	   \label{EqCompLeibnizMHH}\\
	 \mathbf{L}(v_1\wedge v_2,w_1\wedge w_2,z_1\wedge z_2) 
	     & = & 0.
	    \label{EqCompLeibnizHHH} 
	\end{eqnarray}
\end{lemma}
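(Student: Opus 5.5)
We first settle well-definedness and then verify the six identities by direct expansion, using the defining brackets and the Lie-Yamaguti identities (\ref{EqDefRLAToLieYAntisymT})--(\ref{EqDefRLAToLieYDeriROnR}).

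\textbf{Well-definedness.} The only issue is that $\Lambda^2E$ is a quotient of $E\otimes E$. We must check that each formula is $K$-multilinear and vanishes when the arguments of a wedge coincide.
\begin{itemize}
\item In (\ref{EqDefTildeBracketEW2E}), antisymmetry of $R$ in its first two arguments, (\ref{EqDefRLAToLieYAntisymR}), lets $(w_1,w_2)\mapsto -R(w_1,w_2)v$ factor through $\Lambda^2E$.
\item In (\ref{EqDefTildeBracketW2EW2E}), the dependence on $v_1\wedge v_2$ factors for the same reason.
\item For fixed $D=R(v_1,v_2)$, the map $(w_1,w_2)\mapsto -Dw_1\wedge w_2-w_1\wedge Dw_2$ is the derivation extension of $-D$ to $\Lambda^2E$, so it is alternating in $(w_1,w_2)$.
\end{itemize}
Hence $[~,~]^\sim$ is a well-defined bilinear map.

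\textbf{Computing the six identities.} Each $\mathbf{L}$ is computed by expanding both inner brackets with (\ref{EqDefTildeBracketEE})--(\ref{EqDefTildeBracketW2EW2E}) and sorting into $E$- and $\Lambda^2E$-components. It is useful to note that for $\xi=v_1\wedge v_2$ the operator $[\xi,-]^\sim$ acts on $E$ by $-R(v_1,v_2)$ and on $\Lambda^2E$ by the derivation extension of $-R(v_1,v_2)$.
\begin{itemize}
\item (\ref{EqCompLeibnizHHM}) and (\ref{EqCompLeibnizHHH}) say that $\xi\mapsto[\xi,-]^\sim$ is compatible with brackets on $\Lambda^2E$. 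For (\ref{EqCompLeibnizHHM}) this is exactly (\ref{EqDefRLAToLieYDeriROnR}): the bracket of two such operators on $E$ equals the operator of $[\xi,\eta]^\sim$. Identity (\ref{EqCompLeibnizHHH}) then follows formally, because derivation extension to $\Lambda^2E$ preserves commutators.
\item (\ref{EqCompLeibnizMHM}): in $\mathbf{L}(w_1\wedge w_2,v,z)$ the $E$-part is (\ref{EqDefRLAToLieYDeriROnT}), and the $\Lambda^2E$-part is the derivation property of $R(w_1,w_2)$ on $v\wedge z$, which holds trivially. The identity $\mathbf{L}(v,\xi,z)=-\mathbf{L}(\xi,v,z)$ uses $[v,\xi]^\sim=-[\xi,v]^\sim$ and should be a formal rearrangement.
\item (\ref{EqCompLeibnizMHH}) is handled similarly, using (\ref{EqDefRLAToLieYDeriROnR}) together with the derivation extension.
\end{itemize}

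\textbf{The two identities with genuine defects.} Here we expect the main bookkeeping.
\begin{itemize}
\item (\ref{EqCompLeibnizMMM}): expanding $\mathbf{L}(v_1,v_2,z)$ gives, in $E$, the terms $T(v_1,T(v_2,z))$, $T(T(v_1,v_2),z)$, $T(v_2,T(v_1,z))$ and the $R$-terms $R(v_2,z)v_1$, $R(v_1,v_2)z$, $R(v_1,z)v_2$. After antisymmetry these assemble into the cyclic sum in (\ref{EqDefRLAToLieYCyclicRandTTBianchi1}), which vanishes. The $\Lambda^2E$-part consists of $-v_1\wedge T(v_2,z)+T(v_1,v_2)\wedge z+v_2\wedge T(v_1,z)$, which rearranges to the cyclic sum $\mathfrak{S}(T(v_1,v_2)\wedge z)$.
\item (\ref{EqCompLeibnizMMH}): in $\mathbf{L}(v_1,v_2,\xi)$ with $\xi=w_1\wedge w_2$, the $E$-component collects $R(w_1,w_2)$ applied to $T(v_1,v_2)$, together with terms of the form $R(T(\cdot,\cdot),\cdot)$. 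These vanish by combining (\ref{EqDefRLAToLieYDeriROnT}) with (\ref{EqDefRLAToLieYCyclicRT}); the latter is the one place where (\ref{EqDefRLAToLieYCyclicRT}) enters. The $\Lambda^2E$-component is $v_1\wedge R(w_1,w_2)v_2+R(w_1,w_2)v_1\wedge v_2$ plus the term coming from $[v_1\wedge v_2,\xi]^\sim$. Comparing with (\ref{EqDefTildeBracketW2EW2E}) then gives the stated symmetric expression.
\end{itemize}
The main obstacle is the sign and term bookkeeping in this last case, in particular verifying that the $E$-component of $\mathbf{L}(v_1,v_2,\xi)$ indeed cancels. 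We would carry out that component first and in full.
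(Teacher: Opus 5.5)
Your route is essentially the paper's: well-definedness comes from the antisymmetry of $T$ and $R$ plus the universal property of $\Lambda^2E$, and the six identities come from direct expansion using (\ref{EqDefRLAToLieYCyclicRandTTBianchi1}), (\ref{EqDefRLAToLieYDeriROnT}) and (\ref{EqDefRLAToLieYDeriROnR}); your expansions for the other five identities are correct. One correction to your prediction for (\ref{EqCompLeibnizMMH}): with $D=R(w_1,w_2)$, the $E$-component of $\mathbf{L}(v_1,v_2,w_1\wedge w_2)$ is exactly $D\,T(v_1,v_2)-T(Dv_1,v_2)-T(v_1,Dv_2)$, which contains no terms of the form $R(T(\cdot,\cdot),\cdot)$, so it vanishes by (\ref{EqDefRLAToLieYDeriROnT}) alone, and (\ref{EqDefRLAToLieYCyclicRT}) plays no role in this lemma (the paper uses it only later, to show that $\tilde{I}_2$ is an ideal).
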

\begin{proof} The bracket $[~,~]^\sim$ is well-defined thanks to the antisymmetry of $T$ and $R$ in the left two
	arguments and to the fact that the exterior algebra
	$\Lambda E$ is the free graded commutative algebra generated by $E$. The six identities are straight-forward computations:
	 (\ref{EqCompLeibnizMMM}) follows from identity (\ref{EqDefRLAToLieYCyclicRandTTBianchi1}),  (\ref{EqCompLeibnizMHM})
	follows from identity (\ref{EqDefRLAToLieYDeriROnT}),  (\ref{EqCompLeibnizHHM}) follows from identity (\ref{EqDefRLAToLieYDeriROnR}),
	 (\ref{EqCompLeibnizMMH}) follows from identity (\ref{EqDefRLAToLieYDeriROnT}), ( \ref{EqCompLeibnizMHH}) follows from identity (\ref{EqDefRLAToLieYDeriROnR}), and  (\ref{EqCompLeibnizHHH})
	is deduced from identity (\ref{EqDefRLAToLieYDeriROnR}).
\end{proof}
\noindent Now the definition of the bracket
$[~,~]^\sim$ in eqs (\ref{EqDefTildeBracketEE}),
(\ref{EqDefTildeBracketEW2E}) and (\ref{EqDefTildeBracketW2EW2E}), and statement
(\ref{EqCompLeibnizHHH}) of Lemma \ref{LFELeibnizProperties}
implies the obvious
\begin{corollary}\label{CHTildeLeibniz}
	With the hypotheses of the preceding Lemma
	\ref{LFELeibnizProperties} we have
	\begin{enumerate}
		\item $\tilde{\mathfrak{h}}(E)$ is a subalgebra
		  of $\tilde{\mathfrak{g}}(E)$ and satisfies the
		  Leibniz identity.
		 \item  We have
		   \begin{equation}
		    \big[\tilde{\mathfrak{h}}(E),
		    \tilde{\mathfrak{m}}_E\big]^\sim
		    \subset \tilde{\mathfrak{m}}_E
		    ~~~\mathrm{and}~~~
		    \big[\tilde{\mathfrak{m}}_E,
		    \tilde{\mathfrak{h}}(E)\big]^\sim
		    \subset \tilde{\mathfrak{m}}_E
		   \end{equation}
		   and identities (\ref{EqCompLeibnizHHM}) and (\ref{EqCompLeibnizMHH})
		   imply that $\tilde{\mathfrak{m}}_E=E$ is a so-called symmetric Leibniz bimodule of $\tilde{\mathfrak{h}}(E)$, see Section 3 in \cite{F}.
	\end{enumerate}
\end{corollary}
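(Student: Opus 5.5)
The corollary can be read off directly from the defining formulas (\ref{EqDefTildeBracketEE})--(\ref{EqDefTildeBracketW2EW2E}) together with Lemma \ref{LFELeibnizProperties}. No new computation should be needed beyond bookkeeping of which summand each bracket lands in.

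For part 1, we first note that by (\ref{EqDefTildeBracketW2EW2E}) the bracket of two elements $v_1\wedge v_2$ and $w_1\wedge w_2$ is a sum of wedge products, hence lies in $\Lambda^2E=\tilde{\mathfrak{h}}(E)$. Extending bilinearly, and using that such elements span $\Lambda^2E$, this gives $[\tilde{\mathfrak{h}}(E),\tilde{\mathfrak{h}}(E)]^\sim\subset\tilde{\mathfrak{h}}(E)$. The map $\mathbf{L}$ is trilinear in its arguments, so it suffices to check the Leibniz identity on decomposable generators. Equation (\ref{EqCompLeibnizHHH}) states precisely that $\mathbf{L}$ vanishes on three such generators, so $\big(\tilde{\mathfrak{h}}(E),[~,~]^\sim\big)$ is a (left) Leibniz algebra.

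For part 2, the two inclusions are immediate from (\ref{EqDefTildeBracketEW2E}). Both $[w_1\wedge w_2,v]^\sim=-R(w_1,w_2)v$ and $[v,w_1\wedge w_2]^\sim=R(w_1,w_2)v$ lie in $E=\tilde{\mathfrak{m}}_E$, and bilinearity extends this to all of $\tilde{\mathfrak{h}}(E)$. These formulas also show $[v,h]^\sim=-[h,v]^\sim$ for $h\in\tilde{\mathfrak{h}}(E)$ and $v\in E$. This is exactly the defining feature of a symmetric bimodule in the sense of \cite{F}: the right action is minus the left action.

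The remaining bimodule axioms are the three Leibniz-type compatibilities with one argument in $E$ and two in $\tilde{\mathfrak{h}}(E)$, placed in the three possible positions.
\begin{itemize}
\item With $E$ in the last slot, the condition is $\mathbf{L}(h_1,h_2,z)=0$. This is (\ref{EqCompLeibnizHHM}), and it says that $h\mapsto -\hat R(h)$ is a Lie-type representation, i.e.\ a left Leibniz module.
\item With $E$ in the first or middle slot, the conditions are $\mathbf{L}(z,h_1,h_2)=0$ and $\mathbf{L}(h_1,z,h_2)=0$. These are (\ref{EqCompLeibnizMHH}).
\end{itemize}
Matching these with the axioms listed in Section 3 of \cite{F} finishes the proof.

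The only delicate point is this matching, because conventions for left versus right Leibniz bimodules differ. I would write out \cite{F}'s three axioms explicitly in left-Leibniz form and observe that each is one of the $\mathbf{L}$-expressions above, up to sign. Given symmetry, the mixed axioms collapse to the single representation condition.
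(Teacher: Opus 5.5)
Your proposal is correct and is essentially the paper's own argument: the paper likewise reads part 1 off from (\ref{EqDefTildeBracketW2EW2E}) and (\ref{EqCompLeibnizHHH}), and part 2 off from (\ref{EqDefTildeBracketEW2E}) together with (\ref{EqCompLeibnizHHM}) and (\ref{EqCompLeibnizMHH}). Your explicit matching of the three bimodule axioms of \cite{F} with the vanishing of $\mathbf{L}$ with the $E$-argument in each of the three slots, and your observation that the symmetry $[v,h]^\sim=-[h,v]^\sim$ collapses them to the single left-module condition, correctly fill in the detail the paper leaves implicit.
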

\noindent Next, it is reasonable to define the following $K$-submodules of $\tilde{\mathfrak{h}}(E)\subset \tilde{\mathfrak{g}}(E)$ which describe the failure of the Leibniz identity of
$\big(\tilde{\mathfrak{g}}(E),[~,~]^\sim\big)$:
\begin{eqnarray}
   \tilde{I}_1 & := &
      K\mathrm{Span}\Big\{
        \big[v_1\wedge v_2, w_1\wedge w_2\big]^\sim
        +\big[w_1\wedge w_2, v_1\wedge v_2\big]^\sim~
          \Big|~v_1,v_2,w_1,w_2\in E
      \Big\}, 
      \label{EqDefIdealOne}\\
   \tilde{I}_2 & := &
   K\mathrm{Span}\left\{
   \mathfrak{S}_{(v,v_1,v_2)}\big(v\wedge T(v_1,v_2)\big)~
   \big|~v,v_1,v_2\in E
   \right\}, \label{EqDefIdealTwo}\\ 
   \tilde{I}(E) & := &   \tilde{I}_1 +\tilde{I}_2.
          \label{EqDefIdealOnePlusIdealTwo}
\end{eqnarray}
\begin{lemma}\label{LFunctorFComputations}
  	With the notation of the preceding Lemma \ref{LFELeibnizProperties}: 
  	\begin{enumerate}
  		\item $\tilde{I}_1$ and
  	$\tilde{I}_2$, and hence, $\tilde{I}(E)$ are two-sided ideals of
  	$\tilde{\mathfrak{g}}$ contained in the subalgebra $\tilde{\mathfrak{h}}$.
  	\item The factor algebra 
  	\begin{equation}\label{EqDefGE}
  		\mathfrak{g}(E):= 
  		 \tilde{\mathfrak{g}}(E)/\tilde{I}(E)
  	\end{equation}
  	gives rise to a $\mathfrak{m}$-generated reductive Lie algebra pair
  	$\big(\mathfrak{g}(E),\mathfrak{h}(E),E\big)$
  	where
  	\begin{equation}\label{EqDefHE}
  	   \mathfrak{h}(E):= 
  	   \tilde{\mathfrak{h}}(E)/\tilde{I}(E).
  	\end{equation}
  	\item Let $(\mathfrak{g}',\mathfrak{h}',\mathfrak{m}')$
  	be a RLA pair with Lie bracket $[~,~]'$. Let
  	$\chi:(E,T,R)\to (\mathfrak{m}',T',R')$ be a morphism of
  	Lie-Yamaguti algebras, where $T'$ and $R'$ are defined as in (\ref{EqDefRLAMapT}) and (\ref{EqDefRLAMapR}). Then the $K$-linear map $\tilde{\chi}:\tilde{\mathfrak{g}}(E)\to \mathfrak{g}'$ given by 
  	\begin{equation}\label{EqDefFofPsi}
  		\forall~v,v_1,v_2\in E:~~~~~
  	  \tilde{\chi}(v):=\chi(v)~~~\mathrm{and}~~~
  	  \tilde{\chi}\big(v_1\wedge v_2
  	  \big):=
  	   \big[\chi(v_1),\chi(v_2)\big]'_{\mathfrak{h}'}
  	\end{equation}
  	is a morphism of nonassociative algebras and passes to the quotient to define a morphism of RLA pairs $\check{\chi}:\mathfrak{g}(E)\to \mathfrak{g}'$.
  \end{enumerate}
\end{lemma}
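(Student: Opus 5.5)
We prove the three parts in order, relying throughout on the explicit formulas (\ref{EqDefTildeBracketEE})--(\ref{EqDefTildeBracketW2EW2E}) and on Lemma \ref{LFELeibnizProperties}.

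\emph{Part 1: the ideals.} Since $\tilde I_1,\tilde I_2\subset\tilde{\mathfrak h}(E)$, it suffices to bracket their generators from both sides with elements $z\in E$ and $w_1\wedge w_2\in\Lambda^2E$.

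For $\tilde I_1$: a generator is $X=[a,b]^\sim+[b,a]^\sim$ with $a,b\in\Lambda^2E$. By (\ref{EqCompLeibnizHHM}) and the Leibniz identity in $\tilde{\mathfrak h}$ (Corollary \ref{CHTildeLeibniz}), $[a,\cdot]^\sim$ acts as a derivation of $\tilde{\mathfrak h}$ and as a representation on $E$.
\begin{itemize}
\item Left brackets: $[X,z]^\sim=[[a,b],z]+[[b,a],z]$, which by (\ref{EqCompLeibnizHHM}) equals $[a,[b,z]]-[b,[a,z]]+[b,[a,z]]-[a,[b,z]]=0$. The same computation using (\ref{EqCompLeibnizHHH}) gives $[X,c]^\sim=0$ for $c\in\Lambda^2E$.
\item Right brackets: $[c,X]^\sim=[[c,a],b]+[a,[c,b]]+(a\leftrightarrow b)$ is again a symmetrized bracket, hence lies in $\tilde I_1$. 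Also $[z,X]^\sim=-[X,z]^\sim=0$ by (\ref{EqDefTildeBracketEW2E}).
\end{itemize}

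For $\tilde I_2$: a generator is $Y=\mathfrak S(v\wedge T(v_1,v_2))$.
\begin{itemize}
\item $[Y,z]^\sim=-\mathfrak S R(v,T(v_1,v_2))z$, which vanishes by (\ref{EqDefRLAToLieYCyclicRT}). Hence $[z,Y]^\sim=0$ as well.
\item $[Y,c]^\sim$: expanding via (\ref{EqDefTildeBracketW2EW2E}), each term is $R(\cdot,\cdot)$ of a factor of $c$ wedged with the other factor, and the cyclic sum kills it by (\ref{EqDefRLAToLieYCyclicRT}), so $[Y,c]^\sim=0$.
\item $[c,Y]^\sim$: $R(w_1,w_2)$ acts as a derivation on $\Lambda^2E$, and by (\ref{EqDefRLAToLieYDeriROnT}) it is also a derivation of $T$. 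Hence the result is a sum of three elements of the form $\mathfrak S(v'\wedge T(v_1',v_2'))$, i.e.\ it lies in $\tilde I_2$.
\end{itemize}

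\emph{Part 2: the RLA pair.} Set $V=\mathfrak g(E)$. The plan is to show that $V$ is a Leibniz algebra with antisymmetric bracket, hence a Lie algebra.

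For the Leibniz identity, note that $\mathbf L$ is trilinear, so the six formulas of Lemma \ref{LFELeibnizProperties} cover all component types up to the orderings listed there. Their right-hand sides lie in $\tilde I_2$ (for (\ref{EqCompLeibnizMMM}), after rewriting $T(v_1,v_2)\wedge z=-z\wedge T(v_1,v_2)$) or in $\tilde I_1$ (for (\ref{EqCompLeibnizMMH})), or are zero. The remaining orderings, such as $\mathbf L(v,z,w_1\wedge w_2)$ with two $E$-entries, need care. I expect this bookkeeping to be the main obstacle: one must check that every one of the $2^3$ component patterns is either listed or derivable. For the missing ones I would use the standard relation $\mathbf L(\xi_1,\xi_2,\xi_3)+\mathbf L(\xi_2,\xi_1,\xi_3)=-[[\xi_1,\xi_2]+[\xi_2,\xi_1],\xi_3]$, whose right side is controlled by the symmetric parts.

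For antisymmetry in the quotient:
\begin{itemize}
\item on $E\times E$ it holds by antisymmetry of $T$ and of $\wedge$;
\item on mixed pairs it holds by definition (\ref{EqDefTildeBracketEW2E});
\item on $\Lambda^2E\times\Lambda^2E$ the symmetric part is exactly the generator set of $\tilde I_1$.
\end{itemize}

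It remains to see that $(\mathfrak g(E),\mathfrak h(E),E)$ is an $\mathfrak m$-generated RLA pair. Since $\tilde I\subset\Lambda^2E$, the sum $E\oplus\Lambda^2E$ descends to $\mathfrak g(E)=E\oplus\mathfrak h(E)$. Part 2 of Corollary \ref{CHTildeLeibniz} gives $[\mathfrak h,E]\subset E$. Finally, $\mathfrak h(E)$ is spanned by the $\mathfrak h$-components of brackets $[v_1,v_2]$, so the pair is $\mathfrak m$-generated.

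\emph{Part 3: the morphism.} We check that $\tilde\chi$ is a morphism on the three component types.
\begin{itemize}
\item On $E\times E$: $\tilde\chi([v_1,v_2]^\sim)=-\chi T(v_1,v_2)+[\chi v_1,\chi v_2]'_{\mathfrak h'}$. Since $\chi$ intertwines $T$ with $T'=-[\cdot,\cdot]'_{\mathfrak m'}$, this equals $[\chi v_1,\chi v_2]'$.
\item On mixed pairs: intertwining of $R$ gives $\tilde\chi(-R(w_1,w_2)v)=[[\chi w_1,\chi w_2]'_{\mathfrak h'},\chi v]'$.
\item On $\Lambda^2E\times\Lambda^2E$: apply $\tilde\chi$ to (\ref{EqDefTildeBracketW2EW2E}). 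Then use that $\mathrm{ad}_y$ for $y\in\mathfrak h'$ is a derivation preserving $\mathfrak h'$ and $\mathfrak m'$, by (\ref{EqDefRLAadHpreservesHAndM}); writing $y=[\chi v_1,\chi v_2]'_{\mathfrak h'}$, the Jacobi identity gives $[y,[\chi w_1,\chi w_2]']_{\mathfrak h'}=[[y,\chi w_1],\chi w_2]'_{\mathfrak h'}+[\chi w_1,[y,\chi w_2]]'_{\mathfrak h'}$.
\end{itemize}

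Since $\mathfrak g'$ is Lie, $\tilde\chi$ kills the symmetric brackets, so $\tilde I_1\subset\ker\tilde\chi$. For $\tilde I_2$, $\tilde\chi(\tilde I_2)$ is the $\mathfrak h'$-component of the Jacobiator of $\mathfrak g'$ (using $T'(v_1,v_2)=-[v_1,v_2]'_{\mathfrak m'}$), hence zero. Therefore $\tilde\chi$ descends to $\check\chi:\mathfrak g(E)\to\mathfrak g'$, and by construction $\check\chi$ maps $E$ into $\mathfrak m'$ and $\mathfrak h(E)$ into $\mathfrak h'$, so it is a morphism of RLA pairs.
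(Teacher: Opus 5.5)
Your proposal is correct and follows the paper's proof essentially step by step. The ideal properties come from (\ref{EqCompLeibnizHHM}), (\ref{EqCompLeibnizHHH}), (\ref{EqDefRLAToLieYCyclicRT}) and (\ref{EqDefRLAToLieYDeriROnT}). The quotient is a Lie algebra because every $\mathbf{L}$-value in Lemma \ref{LFELeibnizProperties} lies in $\tilde{I}(E)$ and $\tilde{I}_1\subset\tilde{I}(E)$. Finally, $\tilde{\chi}$ kills $\tilde{I}_1$ by antisymmetry and $\tilde{I}_2$ by the Jacobi identity in $\mathfrak{g}'$.

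The bookkeeping you flag as the main obstacle is in fact empty. Lemma \ref{LFELeibnizProperties} already covers all eight component patterns: (\ref{EqCompLeibnizMHM}) and (\ref{EqCompLeibnizMHH}) each treat two orderings, and $\mathbf{L}(v,z,w_1\wedge w_2)$ is exactly (\ref{EqCompLeibnizMMH}). So no symmetrization argument is needed.
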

\begin{proof} \textit{i.)} According to  Corollary
	\ref{CHTildeLeibniz}, the submodule $\Lambda^2E=\tilde{\mathfrak{h}}(E)$
	is a subalgebra of
	 $\big(\tilde{\mathfrak{g}}(E),[~,~]^\sim\big)$ and satisfies the Leibniz identity. Since the $K$-submodule $I_1$ obviously coincides with the
	 ideal of squares $q\big(\tilde{\mathfrak{h}}(E)\big)$,
	 see eqn (\ref{EqDefLeibnizQuadraticIdeal}), it is an
	 abelian two-sided ideal of the subalgebra
	 $\tilde{\mathfrak{h}}(E)$, i.e.~$\big[I_1,\tilde{\mathfrak{h}}(E)\big]^\sim=\{0\}$
	      and $\big[\tilde{\mathfrak{h}}(E),I_1\big]^\sim\subset I_1$.
	 Next, by means of identity
	 (\ref{EqCompLeibnizHHM}) it can be shown by an easy computation that
	 $\{0\}=\big[\tilde{I}_1,E\big]^\sim
	    \stackrel{(\ref{EqDefTildeBracketEW2E})}{=}
	    -\big[E,\tilde{I}_1\big]^\sim$
	 which proves that $\tilde{I}_1$ is a two-sided ideal
	 of $\big(\tilde{\mathfrak{g}}(E),[~,~]^\sim\big)$.\\
	 Secondly, for the $K$-submodule $\tilde{I}_2$ identity (\ref{EqDefRLAToLieYCyclicRT}) yields
	$\{0\}=\big[\tilde{I}_2,E\big]^\sim
	\stackrel{(\ref{EqDefTildeBracketEW2E})}{=}
	-\big[E,\tilde{I}_2\big]^\sim$.
	Moreover identities (\ref{EqDefTildeBracketW2EW2E}) and (\ref{EqDefRLAToLieYCyclicRT}) imply
	$\big[\tilde{I}_2,\tilde{\mathfrak{h}}(E)\big]=
	    \{0\}$.
	On the other hand, thanks to the definition of the
	bracket (\ref{EqDefTildeBracketW2EW2E}) and to identity
	(\ref{EqDefRLAToLieYDeriROnT}) we get after a longer computation
	$\big[\tilde{\mathfrak{h}}(E),\tilde{I}_2\big]
	\subset \tilde{I}_2$
	whence $\tilde{I}_2$ is also a two-sided ideal of $\big(\tilde{\mathfrak{g}}(E),[~,~]^\sim\big)$.
	Finally, the sum of two two-sided ideals is always a
	two-sided ideal. Hence, $\tilde{I}(E)$ also is a two-sided ideal.\\
	\textit{ii.)} Let $\varpi: \tilde{\mathfrak{g}}(E)\to \mathfrak{g}(E)
	=\tilde{\mathfrak{g}}(E)/\tilde{I}(E)$ be the canonical projection which is
	a morphism of non-associative algebras. Hence, for all $\xi,\eta,\zeta\in
	\tilde{\mathfrak{g}}(E)$ it sends
	each term of the form $\mathbf{L}(\xi,\eta,\zeta)\big)$, see (\ref{EqDefLeibnizIdentityMapL}), to the corresponding term $\mathbf{L}\big(\varpi(\xi),\varpi(\eta),\varpi(\zeta)\big)$ in the factor algebra
	$\mathfrak{g}(E)$. On the other hand, by the definition (\ref{EqDefIdealOnePlusIdealTwo}) of $\tilde{I}(E)$ the term $\mathbf{L}(\xi,\eta,\zeta)\big)$ belongs to $\tilde{I}(E)=
	\mathrm{Ker}(\varpi)$
	thanks to the statements of Lemma \ref{LFELeibnizProperties}. This implies that
	$\mathbf{L}\big(\varpi(\xi),\varpi(\eta),\varpi(\zeta)\big)=0$, and
	the factor algebra  $\mathfrak{g}(E)$ thus satisfies the Leibniz identity.
	 Moreover, the bracket $[~,~]$ on 
	$\mathfrak{g}(E)$ induced by $[~,~]^\sim$ is antisymmetric (since $\tilde{I}_1\subset\tilde{I}(E)$)	
    whence the bracket on $\mathfrak{g}(E)$ is a Lie bracket.
    Obviously, since $\tilde{I}(E)\subset \tilde{\mathfrak{h}}(E)$ we can infer that $\mathfrak{h}(E)=\tilde{\mathfrak{h}}(E)/\tilde{I}(E)$
    	is a subalgebra of $\mathfrak{g}(E)$, and since $\tilde{I}(E)\cap E=\{0\}$ we have that the image of the subspace
    	$E$ modulo $\tilde{I}(E)$ is isomorphic to
    $E/(\tilde{I}(E)\cap E)=E/\{0\}\cong E$.  Finally, the definition of the
    	bracket in (\ref{EqDefTildeBracketEW2E}) shows that
    $E$ is invariant under the adjoint action of
    $\mathfrak{h}(E)$, and therefore 
    $(\mathfrak{g}(E),\mathfrak{h}(E),E)$ is a RLA pair.\\
    \textit{iii.)} The map $\tilde{\chi}$
    is a well-defined $K$-linear map thanks to the universal properties of the Grassmann algebra $\Lambda E$. The fact that $\tilde{\chi}$
    is a morphism of nonassociative algebras $\big(\tilde{\mathfrak{g}}(E),[~,~]^\sim\big)
    \to \big(\mathfrak{g}',[~,~]'\big)$, i.e.~for all $\xi_1,\xi_2\in \tilde{\mathfrak{g}}(E)$: 
    $\tilde{\chi}([\xi_1,\xi_2]^\sim) =[\tilde{\chi}(\xi_1),\tilde{\chi}(\xi_2)]'$,
    is a straight-forward computation
    using (\ref{EqDefTildeBracketEE}), (\ref{EqDefFofPsi}), and (\ref{EqDefRLAMapT}) for any $\xi_1=v_1,\xi_2=v_2\in E$, 
    (\ref{EqDefTildeBracketEW2E}), (\ref{EqDefRLAMapR}), and (\ref{EqDefFofPsi})
    for any $\xi_1=v\in E,\xi_2=v_1\wedge v_2\in \Lambda^2E$, and
     (\ref{EqDefTildeBracketEW2E}), (\ref{EqDefRLAMapR}), (\ref{EqDefFofPsi}),
    and the Jacobi identity for the Lie bracket $[~,~]'$ for all
    $v_1\wedge v_2,w_1\wedge w_2\in\Lambda^2E$.\\
    According to (\ref{EqDefIdealOne}) the ideal $\tilde{I}_1$ is spanned by
    $\big[v_1\wedge v_2, w_1\wedge w_2\big]^\sim
    +\big[w_1\wedge w_2, v_1\wedge v_2\big]^\sim$. Using the fact that $\tilde{\chi}$
    is a morphism of non-associative algebras and Definition (\ref{EqDefFofPsi}) we get
    \[
      \tilde{\chi}\Big(\big[v_1\wedge v_2, w_1\wedge w_2\big]^\sim\Big)
      =\big[\tilde{\chi}(v_1\wedge v_2),\tilde{\chi}(w_1\wedge w_2)\big]'
      =\Big[\big[\chi(v_1),\chi(v_2)\big]'_{\mathfrak{h}'},
         \big[\chi(w_1),\chi(w_2)\big]'_{\mathfrak{h}'}
            \Big]'.
    \]
    The right hand side of this equation is antisymmetric in the arguments
    $v_1\wedge v_2$ and $w_1\wedge w_2$ since $[~,~]'$ is a Lie bracket on $\mathfrak{g}'$. This implies that $\tilde{\chi}$ vanishes on $\tilde{I}_1$.
     Moreover, the fact that
     $\tilde{\chi}(\tilde{I}_2)=\{0\}$ is computed in a straight-forward manner using eqs (\ref{EqDefFofPsi}), (\ref{EqDefRLAMapT}), and again the Jacobi identity for the Lie bracket $[~,~]'$.
     It follows that $\tilde{\chi}$ maps the ideal $\tilde{I}(E)=\tilde{I}_1+\tilde{I}_2$
    of $\tilde{\mathfrak{g}}_{E}$ to $\{0\}$
    whence the map $\tilde{\chi}$ passes to the quotient to a well-defined morphism of Lie algebras
     $\check{\chi}:\mathfrak{g}(E)\to
    \mathfrak{g}'$ mapping the subalgebra
    $\mathfrak{h}(E)$ to the subalgebra $\mathfrak{h}'$
    and the $K$-submodule $E$ to the $K$-submodule $\mathfrak{m}'$.
    It follows that $\check{\chi}$ is a morphism of RLA pairs.
\end{proof}
\begin{corollary}\label{CDefFunctorF}
	The assignment 
	\begin{equation}\label{EqDefFunctorF}
		\mathbf{F}: \mathcal{LY} \longrightarrow  \mathrm{m}\mathcal{RLP},
	\end{equation}
	which associates to each Lie-Yamaguti algebra
	$(E,T,R)$ the reductive Lie algebra pair 
	$\mathbf{F}(E,T,R)=(\mathfrak{g}(E),\mathfrak{h}(E),E)$, 
	see (\ref{EqDefGE}) and (\ref{EqDefHE}), and to each
	morphism $\psi:(E,T,R)\to (E',T',R')$ the morphism
	$\mathbf{F}\psi=\check{\psi}:
	(\mathfrak{g}(E),\mathfrak{h}(E),E)\to
	(\mathfrak{g}(E'),\mathfrak{h}(E'),E')$, see
	 (\ref{EqDefFofPsi}), is a covariant functor.
\end{corollary}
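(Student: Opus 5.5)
The plan is to obtain $\mathbf{F}\psi$ by applying part iii.) of Lemma \ref{LFunctorFComputations} to a suitable composite, and then to deduce the functor axioms from the fact that $\mathfrak{g}(E)$ is generated by $E$.

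\textbf{Step 1: well-definedness on morphisms.} Given a morphism $\psi:(E,T,R)\to(E',T',R')$, consider $\chi:=\psi$ viewed as a map into $\mathfrak{m}'=E'$, the $\mathfrak{m}$-part of the RLA pair $\mathbf{F}(E',T',R')=(\mathfrak{g}(E'),\mathfrak{h}(E'),E')$. For part iii.) to apply, $\chi$ must be a morphism into $\mathbf{G}\mathbf{F}(E')$. So I first check that the Lie-Yamaguti structure which $\mathfrak{g}(E')$ induces on $E'$ via (\ref{EqDefRLAMapT}) and (\ref{EqDefRLAMapR}) is exactly $(T',R')$.
\begin{itemize}
\item[] For the binary part, (\ref{EqDefTildeBracketEE}) gives that the $E'$-component of $[v_1,v_2]$ is $-T'(v_1,v_2)$. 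Hence the induced torsion is $T'$.
\item[] For the ternary part, the $\mathfrak{h}$-component of $[v_1,v_2]$ is the class of $v_1\wedge v_2$. By (\ref{EqDefTildeBracketEW2E}), $-[\overline{v_1\wedge v_2},z]=R'(v_1,v_2)z$. Hence the induced curvature is $R'$.
\end{itemize}
Lemma \ref{LFunctorFComputations} iii.) then yields a morphism of RLA pairs $\check\psi:\mathfrak{g}(E)\to\mathfrak{g}(E')$. Explicitly, $\check\psi(v)=\psi(v)$ and $\check\psi(\overline{v_1\wedge v_2})=\overline{\psi(v_1)\wedge\psi(v_2)}$. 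Its target lies in $\mathrm{m}\mathcal{RLP}$ by part ii.).

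\textbf{Step 2: identities.} For $\psi=\mathrm{id}_E$, the formulas in Step 1 show that $\check\psi$ is the identity on the generators $E$ and $\overline{v_1\wedge v_2}$. These generators span $\mathfrak{g}(E)$, so $\mathbf{F}\mathrm{id}_E=\mathrm{id}_{\mathfrak{g}(E)}$.

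\textbf{Step 3: composition.} Take $\psi:E\to E'$ and $\psi':E'\to E''$. Both $\check\psi'\circ\check\psi$ and $(\psi'\circ\psi)\check{}$ are Lie algebra morphisms $\mathfrak{g}(E)\to\mathfrak{g}(E'')$. They agree on $E$, since both send $v$ to $\psi'(\psi(v))$. Because $\mathfrak{g}(E)=E+[E,E]$ is $\mathfrak{m}$-generated, two Lie morphisms that agree on $E$ agree everywhere. One can also check directly on $\overline{v_1\wedge v_2}$, which both send to $\overline{\psi'\psi(v_1)\wedge\psi'\psi(v_2)}$.

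\textbf{Main obstacle.} The only step with real content is Step 1, namely identifying $\mathbf{GF}(E')$ with $(E',T',R')$. This rests on $\tilde I(E')\subset\tilde{\mathfrak h}(E')$ and $\tilde I(E')\cap E'=\{0\}$, both already established in part ii.). Everything else is formal, using the fact that Lie morphisms out of an algebra generated by $E$ are determined by their values on $E$.
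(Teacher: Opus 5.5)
Your proposal is correct and follows the paper's proof. Like the paper, you apply Lemma \ref{LFunctorFComputations} iii.) with $(\mathfrak{g}',\mathfrak{h}',\mathfrak{m}')=(\mathfrak{g}(E'),\mathfrak{h}(E'),E')$ after noting that $\mathbf{GF}(E')=(E',T',R')$, and you explicitly verify that identification, together with the identity and composition checks via $\mathfrak{m}$-generation, which the paper dismisses as obvious and straightforward.
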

\begin{proof}
	The fact that $\mathbf{F}(E,T,R)$ is a RLA pair is
	shown in statement \textit{ii.)} of the preceding
	Lemma \ref{LFunctorFComputations}. Moreover,
	set the RLA pair $(\mathfrak{g}',\mathfrak{h}',\mathfrak{g}')$ occurring in statement \textit{iii.)} of Lemma
	\ref{LFunctorFComputations} equal to
	$(\mathfrak{g}(E'),\mathfrak{h}(E'),E')$. Since it is obvious that its associated LY algebra $\mathbf{G}\mathfrak{g}(E')$ (with binary and ternary brackets defined by (\ref{EqDefRLAMapT} and (\ref{EqDefRLAMapR}))
	 is equal to the LY algebra $(E',T',R')$ it follows that the morphism $\mathbf{F}\psi=\check{\psi}$ is well-defined thanks to statement $iii)$ of lemma \ref{LFunctorFComputations}. The fact that $\mathbf{F}$ preserves composition of morphisms and maps identity morphisms to identity morphisms is a straight-forward consequence of the preceding results and the definitions.
	Hence $\mathbf{F}$ is a covariant functor.
\end{proof}
\noindent In the following theorem we shall show that the functor
$\mathbf{F}$ is a left adjoint of the more obvious functor $\mathbf{G}$, see
  (\ref{EqDefFunctorG}),
and thus makes the RLA pair $(\mathfrak{g}(E),\mathfrak{h}(E),E)$ a universal object
which may be called the \textit{free RLA pair generated by the LY algebra $(E,T,R)$}:
\begin{theorem}\label{TPrincipal}
  The functor $\mathbf{F}$ defined in Corollary \ref{CDefFunctorF}
   is a left adjoint to the functor $\mathbf{G}$, see
  (\ref{EqDefFunctorG}). The natural
  isomorphism of the adjunction
  \begin{equation}
   \nu_{E,\mathfrak{g}'}:\label{EqDefAdjugantKModules}
   \mathrm{Hom}_{\mathrm{m}\mathcal{RLP}}\big(\mathbf{F}(E,T,R),
     (\mathfrak{g}',\mathfrak{h}',\mathfrak{m}')\big)
     \to
    \mathrm{Hom}_\mathcal{LY}
       \big((E,T,R),\mathbf{G}(\mathfrak{g}',\mathfrak{h}',\mathfrak{m}')\big) 
  \end{equation}
  is defined by the restriction
  \begin{equation}\label{EqDefAdjugantElements}
   \nu_{E,\mathfrak{g}'}(\vartheta):=\vartheta|_{E}:E\to 
   \mathfrak{m}'.
  \end{equation}
  Moreover, the components of the unit of the adjunction 
  $\boldsymbol{\eta}_E:E\to \mathbf{GF}(E)$ are natural isomorphisms of
  Lie-Yamaguti algebras, and the components of the counit of the adjunction $\boldsymbol{\epsilon}_{\mathfrak{g}}:\mathbf{FG}(\mathfrak{g})
  \to\mathfrak{g}$ are \textbf{surjective} natural morphisms of 
  $\mathfrak{m}$-generated RLA pairs.
\end{theorem}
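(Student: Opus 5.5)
We prove the theorem by checking directly that the restriction map $\nu_{E,\mathfrak{g}'}$ is a well-defined bijection, natural in both variables. The unit and counit statements then follow by specializing to identity morphisms.

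\emph{Well-definedness.} Let $\vartheta:(\mathfrak{g}(E),\mathfrak{h}(E),E)\to(\mathfrak{g}',\mathfrak{h}',\mathfrak{m}')$ be a morphism of RLA pairs. By Proposition \ref{PLRAtoLieY}, its $\mathfrak{m}$-component $\vartheta|_E$ intertwines the pair $(T,R)$ of $\mathbf{GF}(E)$ with $(T',R')$. We therefore first verify that $\mathbf{GF}(E)=(E,T,R)$. For $v_1,v_2\in E$, eq.~(\ref{EqDefTildeBracketEE}) gives $[v_1,v_2]_{\mathfrak{m}}=-T(v_1,v_2)$ and $[v_1,v_2]_{\mathfrak{h}}=\overline{v_1\wedge v_2}$. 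Then eq.~(\ref{EqDefTildeBracketEW2E}) gives $-[\overline{v_1\wedge v_2},z]=R(v_1,v_2)z$. Both identities use that $\tilde I(E)\subset\tilde{\mathfrak{h}}(E)$ and $\tilde I(E)\cap E=0$, as established in Lemma \ref{LFunctorFComputations} ii.). Hence $\vartheta|_E$ is a morphism of LY algebras $E\to\mathbf{G}\mathfrak{g}'$. This computation also shows that the unit $\boldsymbol{\eta}_E=\nu(\mathrm{id}_{\mathbf{F}E})$ is the identity map of $E$, hence an isomorphism.

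\emph{Bijectivity.} For surjectivity, take a morphism $\chi:E\to\mathfrak{m}'$ of LY algebras. Lemma \ref{LFunctorFComputations} iii.) yields the RLA pair morphism $\check\chi$, and $\check\chi|_E=\chi$. Since $\mathfrak{g}(E)$ is $\mathfrak{m}$-generated, $\check\chi$ lands in $\mathfrak{i}(\mathfrak{g}')$; in fact $\mathfrak{g}'$ is already $\mathfrak{m}$-generated because we work in $\mathrm{m}\mathcal{RLP}$. For injectivity, note that $\mathfrak{g}(E)$ is spanned by $E$ together with the classes $\overline{v_1\wedge v_2}=[v_1,v_2]_{\mathfrak{h}}$. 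Any RLA morphism $\vartheta$ satisfies $\vartheta([v_1,v_2]_\mathfrak{h})=[\vartheta v_1,\vartheta v_2]'_{\mathfrak{h}'}$ by eq.~(\ref{EqCompMMComponentsMorph}). Hence $\vartheta$ is determined by $\vartheta|_E$, and in fact $\vartheta=(\vartheta|_E)^{\check{}}$. This is the only point that needs care: the explicit spanning set of $\mathfrak{g}(E)$ and the compatibility of morphisms with the $\mathfrak{h}$-projection are exactly what force uniqueness.

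\emph{Naturality.} For $\psi:E_1\to E$ in $\mathcal{LY}$ and $\phi:\mathfrak{g}'\to\mathfrak{g}''$ in $\mathrm{m}\mathcal{RLP}$ we must show
\[
(\phi\circ\vartheta\circ\mathbf{F}\psi)|_{E_1}=\phi_{\mathfrak{m}'}\circ\vartheta|_E\circ\psi .
\]
This holds because $\mathbf{F}\psi=\check\psi$ restricts to $\psi$ on $E_1$ and $\mathbf{G}\phi=\phi_{\mathfrak{m}'}$.

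\emph{Counit.} The counit $\boldsymbol{\epsilon}_\mathfrak{g}=\nu^{-1}(\mathrm{id}_{\mathfrak{m}})$ is the map $\check{\mathrm{id}}:\mathfrak{g}(\mathfrak{m})\to\mathfrak{g}$, and its image contains $\mathfrak{m}$. Because it is a Lie algebra morphism, its image also contains $[\mathfrak{m},\mathfrak{m}]$, hence all of $\mathfrak{m}+[\mathfrak{m},\mathfrak{m}]=\mathfrak{i}(\mathfrak{g})=\mathfrak{g}$, since $\mathfrak{g}$ is $\mathfrak{m}$-generated. Thus $\boldsymbol{\epsilon}_\mathfrak{g}$ is surjective. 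Naturality of the unit and counit is automatic from the naturality of $\nu$.
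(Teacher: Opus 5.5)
Your proposal is correct and follows essentially the same route as the paper. Restriction is well defined by Proposition \ref{PLRAtoLieY}, its inverse is $\chi\mapsto\check\chi$ from Lemma \ref{LFunctorFComputations} with uniqueness forced by $\vartheta([v_1,v_2]_{\mathfrak{h}})=[\vartheta(v_1),\vartheta(v_2)]'_{\mathfrak{h}'}$, naturality is a direct check, the unit is $\mathrm{id}_E$, and the counit is onto because its image contains $\mathfrak{m}+[\mathfrak{m},\mathfrak{m}]=\mathfrak{g}$; your explicit verification that $\mathbf{GF}(E)=(E,T,R)$ usefully spells out a point the paper only calls obvious in the proof of Corollary \ref{CDefFunctorF}.
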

\begin{proof}
	Since each morphism of RLA pairs $\vartheta:\mathfrak{g}(E)\to \mathfrak{g}'$ maps
	$E$ to the submodule $\mathfrak{m}'$ of
	$\mathfrak{g}'$, the restriction is well-defined and
	a morphism of Lie-Yamaguti algebras. According to
	Proposition \ref{PLRAtoLieY}, it follows that 
	$\nu_{E,\mathfrak{g}'}$ is well-defined morphism.\\
	 We shall first prove naturality of $\nu$ in both of its arguments: given an arbitrary 
	 morphism $\zeta:(E_1,T_1,R_1)\to (E_2,T_2,R_2)$ of Lie-Yamaguti algebras and an arbitrary morphism $\phi':\mathfrak{g}'_1\to
	 \mathfrak{g}'_2$ of RLA pairs we have to prove the commutativity of the following two diagrams:
	 \begin{equation}\label{EqDiagrams}
	   \begin{tikzcd}
	    \mathrm{Hom}_{\mathrm{m}\mathcal{RLA}}
	    \big(\mathfrak{g}(E_1),\mathfrak{g}'\big)
	    \arrow[r, "\nu_{E_1,\mathfrak{g}'}"]
	    & \mathrm{Hom}_{\mathcal{LY}}(E_1,\mathfrak{m}')\\
	    \mathrm{Hom}_{\mathrm{m}\mathcal{RLA}}
	    \big(\mathfrak{g}(E_2),\mathfrak{g}'\big)
	    \arrow[r, "\nu_{E_2,\mathfrak{g}'}"]
	    \arrow[u, "(~)\circ F(\zeta)"]
	    & \mathrm{Hom}_{\mathcal{LY}}(E_2,\mathfrak{m}')
	     \arrow[u, "(~)\circ \zeta"']
	   \end{tikzcd}
	   ~~\mathrm{and}~~
	   \begin{tikzcd}
	   \mathrm{Hom}_{\mathrm{m}\mathcal{RLA}}
	   \big(\mathfrak{g}(E),\mathfrak{g}'_1\big)
	   \arrow[r, "\nu_{E,\mathfrak{g}_1'}"]
	   \arrow[d, "\phi'\circ (~)"']
	   & \mathrm{Hom}_{\mathcal{LY}}(E,\mathfrak{m}_1')
	   \arrow[d, "G(\phi')\circ(~)"]
	   \\
	   \mathrm{Hom}_{\mathrm{m}\mathcal{RLA}}
	   \big(\mathfrak{g}(E),\mathfrak{g}'_2\big)
	   \arrow[r, "\nu_{E,\mathfrak{g}_2'}"]
	   & \mathrm{Hom}_{\mathcal{LY}}(E,\mathfrak{m}_2')
	   \end{tikzcd}.
	 \end{equation}
	 Indeed,
	 for all morphisms $\vartheta_2:\mathfrak{g}(E_2)\to \mathfrak{g}'$ of RLA pairs we have 
	  \begin{eqnarray*}
	     \nu_{E_1,\mathfrak{g}'}
	     \big(\vartheta_2\circ (\mathbf{F}\zeta)\big) & = &
	     \big(\vartheta_2\circ (\mathbf{F}\zeta)\big)|_{E_1}
	     =\vartheta_2|_{E_2}\circ \zeta
	     =\big(\nu_{E_2,\mathfrak{g}'}(\vartheta_2)\big)
	       \circ \zeta,
	  \end{eqnarray*}
	  which shows that the left diagram in  (\ref{EqDiagrams}) commutes. Next,
	   for every morphism
	  $\vartheta_1:\mathfrak{g}(E)\to \mathfrak{g}'_1$
	  of RLA pairs we get
	  \begin{eqnarray*}
	   (\mathbf{G}\phi')\circ (\nu_{E,\mathfrak{g}'_1}(\vartheta_1))
	   & =&
	   (\mathbf{G}\phi')\circ \left(\vartheta_1|_{E}\right)
	   =
	   \big(\phi'\circ \vartheta_1\big)|_{E}
	   =
	   \nu_{E,\mathfrak{g}'_2}(\phi'\circ\vartheta_1),
	  \end{eqnarray*}
	  which shows that the right diagram in (\ref{EqDiagrams}) commutes.\\
	  In order to find an inverse of 
	  $\nu_{E,\mathfrak{g}'}$, we associate to each
	  morphism $\chi:E\to \mathfrak{m}'$ of Lie-Yamaguti algebras the morphism of RLA pairs
	  $\check{\chi}:\mathfrak{g}(E)\to \mathfrak{g}'$ defined in lemma \ref{LFunctorFComputations} induced by the map in 
	  (\ref{EqDefFofPsi}). We compute
	  for all morphisms of Lie-Yamaguti algebras
	  $\chi:E\to \mathfrak{m}'$, for all morphisms of
	  RLA-pairs $\vartheta:\mathfrak{g}(E)\to 
	  \mathfrak{g}'$, and for
	  all $v,v_1,v_2\in E$:
	  \[
	     \big( \nu_{E,\mathfrak{m}'}(\check{\chi})\big)(v)
	     =\check{\chi}|_E(v) =\chi(v)
	  \]
	  and 
	  \begin{eqnarray*}
	  \big( \nu_{E,\mathfrak{m}'}(\vartheta)\big)^\vee(v)
	    &=& \big(\nu_{E,\mathfrak{m}'}(\vartheta)\big)(v)=
	       \vartheta|_E(v) =\vartheta(v),\\
	  \big( \nu_{E,\mathfrak{m}'}(\vartheta)\big)^\vee
	   \big(v_1\wedge v_2~\mathrm{mod}~\tilde{I}(E)\big) 
	   & = &   \Big[\nu_{E,\mathfrak{m}'}(\vartheta)(v_1),
	   \nu_{E,\mathfrak{m}'}(\vartheta)(v_2)
	       \Big]'_{\mathfrak{h}'}
	       =\Big[\vartheta|_E(v_1),
	         \vartheta|_E(v_2)
	       \Big]'_{\mathfrak{h}'}\\
	    & = & \big[\vartheta(v_1),\vartheta(v_2)
	          \big]'_{\mathfrak{h}'}
	          =\vartheta\big([v_1,v_2]_{\mathfrak{h}(E)}
	          \big)
	    =  \vartheta\big(v_1\wedge v_2~\mathrm{mod}~\tilde{I}(E)\big),
	  \end{eqnarray*}
which shows that the inverse of $\nu_{E,\mathfrak{m}'}$ is the
map $\chi\mapsto \check{\chi}$, whence $\nu$ is a natural isomorphism, and thus $\mathbf{F}$ is a left adjoint functor of $\mathbf{G}$..\\
The unit of the adjunction 
$\boldsymbol{\eta}:I
\Rightarrow \mathbf{GF}$
is a natural transformation whose component
$$\boldsymbol{\eta}_E:E\to \mathbf{GF}E=\mathbf{G}\mathfrak{g}(E)=E$$
is given by 
$\boldsymbol{\eta}_E
   =\nu_{E,\mathfrak{g}(E)}(\mathrm{id}_{\mathfrak{g}(E)})
   =\mathrm{id}_{\mathfrak{g}(E)}|_E=\mathrm{id}_E$ which is an isomorphism.\\
The counit of the adjunction $\boldsymbol{\epsilon}:\mathbf{FG}
\Rightarrow I$ is a natural transformation whose component 
$$\boldsymbol{\epsilon}_\mathfrak{g}:
\mathfrak{g}(\mathfrak{m})\to \mathfrak{g}$$
is given by
$\boldsymbol{\epsilon}_\mathfrak{g}
   ={\nu_{\mathfrak{m},\mathfrak{g}}}^{-1}
   (\mathrm{id}_\mathfrak{m})
   =\mathrm{id}^\vee_\mathfrak{m}$.
Hence, we have that
\begin{equation}\label{EqCompCounitFirstAdjunction}
	\forall~z,z_1,z_2\in\mathfrak{m}:~~~
  \boldsymbol{\epsilon}_\mathfrak{g}(z)=z~~~\mathrm{and}~~~
  \boldsymbol{\epsilon}_\mathfrak{g}\big(z_1\wedge z_2~\mathrm{mod}~\tilde{I}_\mathfrak{m}\big)
  =[z_1,z_2]_\mathfrak{h}.
\end{equation}
Obviously, $\mathrm{Im}(\boldsymbol{\epsilon}_\mathfrak{g})=\mathfrak{m}
\oplus K\mathrm{Span}\{[z_1,z_2]_\mathfrak{h}~|~z_1,z_2\in\mathfrak{m}\}
=\mathfrak{i}(\mathfrak{m})=\mathfrak{g}$. 
\end{proof}

\noindent \textbf{Remark:} The construction shows that 
for a \textit{finite-dimensional} LY-algebra $(E,T,R)$
its free reductive Lie algebra pair
$\mathbf{F}(E,T,R)= (\mathfrak{g}(E),\mathfrak{h}(E),E)$ is also
finite-dimensional since $\Lambda^2 E$ is obviously
finite-dimensional.

\noindent In order to locate the enveloping Lie algebra $\hat{\mathfrak{g}}(E)$
of a LY algebra $(E,T,R)$ in a categorical manner, we specialize to the subcategories
$\mathcal{LY}\mathrm{s}$, $\mathrm{m}\mathcal{RLP}\mathrm{s}$, and
$\mathcal{RLP}\mathrm{s}$ of $\mathcal{LY}$, $\mathrm{m}\mathcal{RLP}$, and
$\mathcal{RLP}$, respectively, by restricting all morphism sets to those of all \textbf{surjective morphisms}. We get the following 
\begin{theorem}\label{TEnvLieAlgUniversal}
	The assignment $\hat{\mathfrak{g}}:\mathcal{LY}\mathrm{s}\to \mathrm{m}\mathcal{RLP}\mathrm{s}$ defined by the classical enveloping Lie algebra
	is a covariant functor which is \textbf{right adjoint} to the obvious restriction
	$\mathbf{G}_s:\mathrm{m}\mathcal{RLP}\mathrm{s}\to \mathcal{LY}\mathrm{s}$
	of the functor $\mathbf{G}$. The counit of this second adjunction is an isomorphism, and the unit $\boldsymbol{\eta}_{s\mathfrak{g}}$ is surjective.\\
	As a consequence, we have the following compositioin of canonical surjective morphisms 
	in $\mathrm{m}\mathcal{RLP}$ for every $\big(\mathfrak{g},\mathfrak{h},
	\mathfrak{m}\big)$ in $\mathrm{m}\mathcal{RLP}$
	\begin{equation}\label{EqSurjCanMorph}
		\begin{tikzcd}
		\mathfrak{g}(\mathfrak{m})
		  \arrow[r, twoheadrightarrow, "\boldsymbol{\epsilon}_\mathfrak{g}"]
		      &   \mathfrak{g}
		      	\arrow[r, twoheadrightarrow, "\boldsymbol{\eta}_{s\mathfrak{g}}"]
		      	&\hat{\mathfrak{g}}(\mathfrak{m})
		\end{tikzcd}
	\end{equation}
	(where $\boldsymbol{\epsilon}_\mathfrak{g}$ is given by (\ref{EqCompCounitFirstAdjunction}) and
	whose kernels both are central ideals. In the finite-dimensional case
	it follows that any of the three algebras in  (\ref{EqSurjCanMorph}) is
	solvable (nilpotent) iff $\hat{\mathfrak{g}}(\mathfrak{m})$ is solvable
	(nilpotent).
\end{theorem}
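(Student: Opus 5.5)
The plan is to build $\hat{\mathfrak{g}}$ on morphisms, use surjectivity exactly where well-definedness is needed, and then prove the adjunction through an explicit bijection of hom-sets
\[
\mathrm{Hom}_{\mathrm{m}\mathcal{RLP}\mathrm{s}}\big((\mathfrak{g},\mathfrak{h},\mathfrak{m}),\hat{\mathfrak{g}}(E)\big)\cong\mathrm{Hom}_{\mathcal{LY}\mathrm{s}}\big(\mathfrak{m},E\big),
\]
given by restriction to $\mathfrak{m}$.

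\emph{Functoriality.} Let $\psi:(E,T,R)\to(E',T',R')$ be a surjective LY morphism. On $E$ set $\hat{\mathfrak{g}}(\psi):=\psi$. On $\hat{\mathfrak{h}}(E)$ set $\sum_i\hat{R}(a_i,b_i)\mapsto\sum_i\hat{R}'(\psi a_i,\psi b_i)$.
\begin{itemize}
\item This is well defined. Suppose $\sum_i\hat{R}(a_i,b_i)=0$. Every $z'\in E'$ has the form $\psi(z)$, and
\[
\sum_i R'(\psi a_i,\psi b_i)\psi(z)=\psi\Big(\sum_i R(a_i,b_i)z\Big)=0
\]
by (\ref{EqDefMorphLY}). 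So the image operator vanishes. This is the precise point where surjectivity is indispensable, in line with Example \ref{ExCounterexample}.
\item The map is surjective, since $\psi$ and the spanning set of $\hat{\mathfrak{h}}(E')$ are hit.
\item It preserves the bracket (\ref{EqDefEnvelLieAlgBracket}). The $E\times E$ part follows from (\ref{EqDefMorphLY}). The $\hat{\mathfrak{h}}\times E$ part is the defining relation $\hat{\mathfrak{g}}(\psi)(\xi)\circ\psi=\psi\circ\xi$. The $\hat{\mathfrak{h}}\times\hat{\mathfrak{h}}$ part follows from the same intertwining relation together with surjectivity.
\item Identities and composites are clear.
\end{itemize}

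\emph{Adjunction.} For $\vartheta:\mathfrak{g}\to\hat{\mathfrak{g}}(E)$, the restriction $\vartheta|_{\mathfrak{m}}$ is a surjective LY morphism $\mathfrak{m}\to E$ by Proposition \ref{PLRAtoLieY}; it is surjective because $\vartheta$ is and $\vartheta$ respects the splitting.

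Conversely, given a surjective LY morphism $\psi:\mathfrak{m}\to E$, define $\Phi:\mathfrak{g}\to\hat{\mathfrak{g}}(E)$ by $\Phi|_{\mathfrak{m}}=\psi$, and for $y\in\mathfrak{h}$ let $\Phi(y)$ be the operator $\psi(z)\mapsto\psi([y,z])$.
\begin{itemize}
\item Well-definedness and the fact that $\Phi(y)\in\hat{\mathfrak{h}}(E)$ both use that $(\mathfrak{g},\mathfrak{h},\mathfrak{m})$ is $\mathfrak{m}$-generated. By (\ref{EqCompHCompOfLieIdealI}) we can write $y=\sum_i[z_{1i},z_{2i}]_{\mathfrak{h}}$. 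Then
\[
\psi([y,z])=-\sum_i R'(\psi z_{1i},\psi z_{2i})\psi(z),
\]
which vanishes when $\psi(z)=0$. Hence $\Phi(y)=-\sum_i\hat{R}'(\psi z_{1i},\psi z_{2i})$.
\item $\Phi$ is a Lie morphism by checking components, using (\ref{EqDefRLAMapT}) and (\ref{EqDefRLAMapR}). It is surjective because $\psi$ is.
\item Uniqueness: any RLA morphism extending $\psi$ must send $[z_1,z_2]_{\mathfrak{h}}$ to the $\hat{\mathfrak{h}}$-component of $[\psi z_1,\psi z_2]^\wedge$. This forces $\Phi$, so restriction is bijective.
\item Naturality is checked by restriction, as in Theorem \ref{TPrincipal}.
\end{itemize}

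\emph{Unit and counit.} The counit $\mathbf{G}_s\hat{\mathfrak{g}}(E)=E\to E$ is the identity, hence an isomorphism. The unit $\boldsymbol{\eta}_{s\mathfrak{g}}$ is the $\Phi$ associated with $\mathrm{id}_{\mathfrak{m}}$, i.e.\ $y\mapsto\mathrm{ad}_y|_{\mathfrak{m}}$; it is surjective.

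\emph{Central kernels.} Both maps in (\ref{EqSurjCanMorph}) are identities on $\mathfrak{m}$, so their kernels $k$ lie in the $\mathfrak{h}$-parts.
\begin{itemize}
\item For $\boldsymbol{\eta}_{s\mathfrak{g}}$ the kernel condition says directly $[k,\mathfrak{m}]=0$.
\item For $\boldsymbol{\epsilon}_{\mathfrak{g}}$, take $k$ in its kernel and $z\in\mathfrak{m}$. Then $\boldsymbol{\epsilon}_{\mathfrak{g}}([k,z])=[\boldsymbol{\epsilon}_{\mathfrak{g}}(k),z]=0$ with $[k,z]\in\mathfrak{m}$, so injectivity on $\mathfrak{m}$ gives $[k,z]=0$.
\end{itemize}
In both cases the Jacobi identity gives $[k,[z_1,z_2]]=0$, and $\mathfrak{m}$-generation then makes $k$ central.

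\emph{Solvability and nilpotency.} In finite dimension, solvability and nilpotency pass to quotients and are preserved by central extensions. This gives the final equivalence for all three algebras in (\ref{EqSurjCanMorph}).

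I expect the main obstacle to be the well-definedness steps, both for $\hat{\mathfrak{g}}(\psi)$ and for $\Phi$ on $\mathfrak{h}$, together with the bracket check on $\hat{\mathfrak{h}}\times\hat{\mathfrak{h}}$. My approach there is to always test operators on elements of the form $\psi(z)$ and reduce everything to the intertwining relations (\ref{EqDefMorphLY}).
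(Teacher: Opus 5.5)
Your proposal is correct and follows essentially the same route as the paper. Surjectivity of $\psi$ makes $\hat R(v_1,v_2)\mapsto \hat R'(\psi v_1,\psi v_2)$ well defined on $\hat{\mathfrak h}(E)$, and the hom-set bijection is restriction to $\mathfrak m$, with inverse determined by $[z_1,z_2]_{\mathfrak h}\mapsto -\hat R'(\chi z_1,\chi z_2)$. The one small deviation is the centrality of the kernels: you prove it directly for each map (identity on $\mathfrak m$, the Jacobi identity, $\mathfrak m$-generation), whereas the paper identifies the kernel of the composite $\mathfrak g(\mathfrak m)\to\hat{\mathfrak g}(\mathfrak m)$ as $\mathrm{Ker}(\hat R)/\tilde I$ and checks that this kernel is central.
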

\begin{proof}
	For any surjective morphism $\psi:(E,T,R)\to (E',T',R')$ of Lie-Yamaguti algebras and for all $v,v_1,v_2\in E$
	we should like to define the action of the would-be functor $\hat{\mathfrak{g}}$ on $\psi$ by $\big(\hat{\mathfrak{g}}\psi\big)(v):=v$
	and, in view of Definition (\ref{EqDefHatHOfE}) of the subalgebra
	$\hat{\mathfrak{h}}(E)$, by
	\begin{equation}\label{EqDefFunctorGHatOnMorphHHatHHat}
	    \big(\hat{\mathfrak{g}}\psi\big)\left(\hat{R}(v_1,v_2)\right):=
	       \hat{R}'\big(\psi(v_1),\psi(v_2)\big).
	\end{equation}
	In order to show that this is well-defined, we use the fact that the subalgebra $\hat{\mathfrak{h}}(E)\cong \Lambda^2E/\mathrm{Ker}(\hat{R})$ 
	and show that the $K$-linear map $\Lambda^2\psi:\Lambda^2E\to\Lambda^2E'$
	maps the kernel of $\hat{R}$ to the kernel of $\hat{R}'$ so that
	$\Lambda^2\psi$ passes to the quotient: indeed, let
	$\alpha=\sum_{i=1}^Nv_i\wedge w_i$ be an arbitrary element of 
	$\mathrm{Ker}\big(\hat{R}\big)$, where $v_1,\ldots,v_N,w_1,\ldots,w_N\in E$. We compute for all $v\in E$
	\begin{eqnarray*}
	  \Big(\hat{R}'\big(\big(\Lambda^2\psi\big)(\alpha)\big)\Big)\big(\psi(v)\big) & = &
	     	\sum_{i=1}^NR'\big(\psi(v_i),\psi(w_i)\big)\big(\psi(v)\big)
	     	=
	     	\sum_{i=1}^N\psi\big(R(v_i,w_i)v\big)
	   =  \psi\left(\hat{R}(\alpha)(v)\right)
	  = 0.
	\end{eqnarray*}
	\textit{Since $\psi$ is surjective} we have that $\hat{R}'\big(\big(\Lambda^2\psi\big)(\alpha)\big)=0$ and thereby proving that
	$\big(\Lambda^2\psi\big)\big(\mathrm{Ker}\big(\hat{R}\big)\big)\subset \mathrm{Ker}\big(\hat{R}'\big)$ which makes
	formula (\ref{EqDefFunctorGHatOnMorphHHatHHat}) well-defined. Note that
	the linear map $\Lambda^2\psi$ is surjective whence $\hat{\mathfrak{g}}\psi$ is automatically surjective. It is a routine check that $\hat{\mathfrak{g}}\psi$ is a morphism of RLA pairs using 
	(\ref{EqDefFunctorGHatOnMorphHHatHHat}), (\ref{EqDefEnvelLieAlgBracket}), (\ref{EqDefRLAToLieYDeriROnR}) and (\ref{EqDefMorphLY}). Next, in order to prove the adjunction
	\[
	   \nu_{\mathfrak{g},E'}:
	   \mathrm{Hom}_{\mathcal{LY}\mathrm{s}}
	   \left(\mathbf{G}_\mathrm{s}(\mathfrak{g},\mathfrak{h},\mathfrak{m}),
	           (E',T',R')\right)
	           \to
	     \mathrm{Hom}_{\mathrm{m}\mathcal{RLP}\mathrm{s}}  
	     \left((\mathfrak{g},\mathfrak{h},\mathfrak{m}),
	          \hat{\mathfrak{g}}(E',T',R')\right),
	\]
	we set $(\nu_{\mathfrak{g},E'}(\chi))(z):=\chi(z)$, and 
	\[
	\forall~z_1,z_2\in 
	\mathfrak{m}:~~~~
	(\nu_{\mathfrak{g},E'}(\chi))\left([z_1,z_2]_\mathfrak{h}\right)
	    := -\hat{R}'\big(\chi(z_1),\chi(z_2)\big)
	\]
	which is shown to be well-defined in a completely analogous way as for 
	(\ref{EqDefFunctorGHatOnMorphHHatHHat}) by observing that the map
	$\Lambda^2\mathfrak{m}\to \mathfrak{h}:z_1\wedge z_2\mapsto [z_1,z_2]_\mathfrak{h}$ is surjective since $\mathfrak{g}$ is 
	$\mathfrak{m}$-generated, and that its kernel is mapped by $\Lambda^2\chi$
	to the kernel of $\hat{R}'$ thanks to the surjectivity of $\chi$. By a longer, but straight-forward computation, it can be checked that each $\nu_{\mathfrak{g},E'}(\chi)$ is a morphism of $\mathrm{m}\mathcal{RLP}\mathrm{s}$, that its restriction to $\mathfrak{m}$ is the inverse of $\nu_{\mathfrak{g},E'}$, and that it defines a natural isomorphism.
	Finally, it is easily shown using  (\ref{EqDefTildeBracketEE}),
	(\ref{EqDefTildeBracketEW2E}), and (\ref{EqDefTildeBracketW2EW2E}) that
	$\mathrm{Ker}(\hat{R})$ contains the two-sided ideal $\tilde{I}$ of
	$\tilde{\mathfrak{g}}(E)$ and that the quotient $\mathrm{Ker}(\hat{R})/
	\tilde{I}$ is central in $\mathfrak{g}(E)$ which completes the proof of the theorem.
\end{proof}

\begin{small}

\end{small}


\begin{thebibliography}{99}


    
\bibitem{BEM1} Benito, P.,  Elduque,  A.,   Mart\'{\i}n-Herce,  F.: \textit{Irreducible Lie-Yamaguti algebras.} J. Pure Appl. Algebra \textbf{213} (2009) 795--808.


\bibitem{BEM2} Benito, P.,  Elduque,  A.,   Mart\'{\i}n-Herce,  F.: \textit{Irreducible Lie-Yamaguti algebras of generic type.} J. Pure Appl. Algebra \textbf{215} (2011), 108--130.



      



\bibitem{CD}  Chapoton, F., Dotsenko, V.:
  \textit{Yamaguti algebras and noncrossing partitions.}
         {\tt arXiv:2510.03148}


\bibitem{F} Feldvoss, J.:
\textit{Leibniz algebras as non-associative algebras.}
Vojt\v{e}chovsk\'{y}, P. (ed.) et al., Nonassociative mathematics and its applications.
Fourth mile high conference on nonassociative mathematics, Denver, CO, USA, July 29 -- August 5,
2017. Proceedings. Providence, RI: American Mathematical Society (AMS). 
Contemp.~Math.~\textbf{721},
115--149 (2019). 


\bibitem{Hel78} Helgason, S.: \textit{Differential Geometry, Lie Groups and Symmetric Spaces}.
      Academic Press, New York, 1978.
       


\bibitem{Jac49} Jacobson, N.: \textit{Lie and Jordan Triple Systems}. 
Amer.~J.~Math.~\textbf{71} (1949) 149--170.
       

\bibitem{KW01} Kinyon, M.K., Weinstein, A.: \textit{Leibniz algebras, Courant algebroids, and multiplications on reductive homogeneous spaces}.
 Amer.~J.~Math.~\textbf{123} (2001), 525--550.


 \bibitem {KN63} Kobayashi, S., Nomizu, K.: 
  \textit{Fondations of Differential Geometry},
Vol I, Interscience Publishers, Wiley and Sons, New York, 1963.

 \bibitem{KN69} Kobayashi, S., Nomizu, K.: 
  \textit{Foundations of Differential
    Geometry.} Vol II, Interscience Publishers, Wiley, New York, 1969.

 \bibitem{Kos60} Kostant, B.: \textit{A characterization of invariant affine connections}. Nagoya Math. J.~\textbf{16} (1960), 35--50.
 
\bibitem{Lod92} Loday, J.-L.: \textit{Cyclic Homology}. Springer, Berlin, 1992.
     
\bibitem{Mac98} Mac Lane, S.: \textit{Categories for the Working Mathematician}. 
      2nd ed., Springer, New York, 1998.
      

      


\bibitem{NVH65} Nguyen-van Hai: \textit{Relations entre les diverses obstructions relatives \`{a} 
	l'existence d'une connexion lin\'{e}aire invariante sur un espace homog\`{e}ne.}
C.~R.~Acad.~Sci.~Paris \textbf{260} (1965), 45--48.

\bibitem{Nom54} Nomizu, K.: \textit{Invariant affine connections on homogeneous spaces}. Amer.~J.~Math.~\textbf{76} (1954), 33--65.
     
     

\bibitem{Smi11} Smirnov, O.N.: \textit{Imbedding of Lie triple systems into Lie algebras.} 
J.~Algebra~\textbf{341} (2011), 1--12.

\bibitem{Yam58} Yamaguti, K.: 
\textit{On the Lie triple system and its generalization},
J.~Sci.~Hiroshima Univ., Ser.~A, 
\textbf{21} (1958), 155--160.

\end{thebibliography}
\end{document}